\newcommand{\pd}[3]{\frac{\partial ^{#1} #2}{\partial #3}}
\newcommand{\nrm}[2]{\ensuremath{\|#1\|_{#2}}}
\newcommand{\pdy}{\partial_{\bsy}}
\newcommand{\dist}{\mathrm{dist}}
\newcommand{\cost}{\mathrm{cost}}
\newcommand{\diam}{\mathrm{diam}}
\newcommand{\Df}{\ensuremath{D_\mathrm{f}}}
\newcommand{\N}[0]{\mathbb{N}}
\newcommand{\R}[0]{\mathbb{R}}
\newlength\figureheight
\newlength\figurewidth
\newcommand{\BIG}{\bBigg@{3}}
\newcommand{\vast}{\bBigg@{4}}
\newcommand{\Vast}{\bBigg@{5}}
\newcommand{\Qhat}[0]{\ensuremath{\widehat{Q}}}
\newcommand{\Ehat}[0]{\ensuremath{\widehat{E}}}
\newcommand{\Vhat}[0]{\ensuremath{\widehat{\bbV}}}
\newcommand{\Uni}[0]{\ensuremath{\mathrm{U}}}
\newcommand{\Qml}[0]{\ensuremath{Q^\mathrm{ML}}}
\newcommand{\Qhatml}[0]{\ensuremath{\Qhat^\mathrm{ML}}}
\newcommand{\Qhattg}[0]{\ensuremath{\Qhat^\mathrm{TG}}}
\newcommand{\Csetup}[0]{\ensuremath{\calC^\mathrm{setup}}}
\newcommand{\Csolve}[0]{\ensuremath{\calC^\mathrm{solve}}}
\newtheorem{theorem}{Theorem}[section]
\newtheorem{lemma}{Lemma}[section]
\newtheorem{proposition}[theorem]{Proposition}
\newtheorem{definition}{Definition}[section]
\newtheorem{assumption}{Assumption A$\!\!\!$}
\theoremstyle{definition}
\theoremstyle{plain}
\numberwithin{equation}{section}
\newcommand{\sigdiff}[0]{\ensuremath{\sigma_\mathrm{diff}}}
\newcommand{\sigabs}[0]{\ensuremath{\sigma_\mathrm{abs}}}
\newcommand{\bsbeta}{{\boldsymbol{\beta}}}
\newcommand{\bsDelta}{{\boldsymbol{\Delta}}}
\newcommand{\bsa}{{\boldsymbol{a}}}
\newcommand{\bsb}{{\boldsymbol{b}}}
\newcommand{\bsgamma}{{\boldsymbol{\gamma}}}
\newcommand{\bst}{{\boldsymbol{t}}}
\newcommand{\bsu}{{\boldsymbol{u}}}
\newcommand{\bsx}{{\boldsymbol{x}}}
\newcommand{\bsy}{{\boldsymbol{y}}}
\newcommand{\bsz}{{\boldsymbol{z}}}
\newcommand{\bszero}{{\boldsymbol{0}}}
\newcommand{\bsone}{{\boldsymbol{1}}}
\newcommand{\rd}{\,\mathrm{d}}
\newcommand{\bbE}{\mathbb{E}}
\newcommand{\bbV}{\mathbb{V}}
\newcommand{\calA}{\mathcal{A}}
\newcommand{\calB}{\mathcal{B}}
\newcommand{\calC}{\mathcal{C}}
\newcommand{\calI}{\mathcal{I}}
\newcommand{\calL}{\mathcal{L}}
\newcommand{\calM}{\mathcal{M}}
\newcommand{\calG}{\mathcal{G}}
\newcommand{\calP}{\mathcal{P}}
\newcommand{\calW}{\mathcal{W}}
\newcommand{\calT}{\mathcal{T}}
\newcommand{\Forall}{\quad\text{ for all }}
\def\R{\mathbb{R}}
\newcommand{\setu}{{\mathrm{\mathfrak{u}}}}
\newcommand{\mask}[1]{{}}
\definecolor{darkred}{RGB}{139,0,0}
\definecolor{darkgreen}{RGB}{0,100,0}
\definecolor{darkmagenta}{RGB}{170,0,120}
\definecolor{darkpurple}{RGB}{110,0,180}
\definecolor{darkblue}{RGB}{40,0,200}
\definecolor{darkbrown}{rgb}{0.75,0.40,0.15}
\definecolor{grey}{RGB}{59,61,63}
\newcommand{\be}{\begin{equation}}
\newcommand{\ee}{\end{equation}}
\newcommand{\bea}{\begin{eqnarray}}
\newcommand{\eea}{\end{eqnarray}}
\newcommand{\beas}{\begin{eqnarray*}}
\newcommand{\eeas}{\end{eqnarray*}}
\def\r2p{{\sqrt{2\pi}}}
\newcommand*{\amin}{\ensuremath{a_{\min}}}
\newcommand*{\amax}{\ensuremath{a_{\max}}}
\newcommand*{\ubar}{\ensuremath{\overline{u}}}
\newcommand*{\lambdaunder}{\ensuremath{\underline{\lambda}}}
\newcommand*{\lambdaover}{\ensuremath{\overline{\lambda}}}
\newcommand{\hsuff}{\ensuremath{\overline{h}}}
\newcommand{\Ws}[0]{\ensuremath{\calW_{s, \bsgamma}}}
\newcommand{\bigO}{\mathcal{O}}
\title{Multilevel quasi-Monte Carlo for random elliptic
  eigenvalue problems II: Efficient algorithms and numerical results}
\date{\today}
\let\@fnsymbol\@arabic
\author{Alexander D. Gilbert\footnotemark[1]
				\and
             Robert Scheichl\footnotemark[2]
             }
\begin{document}

\maketitle

\footnotetext[1]{School of Mathematics and Statistics, University of New South Wales, 
                           Sydney NSW 2052, Australia.\\
                           \texttt{alexander.gilbert@unsw.edu.au}
                           }
\footnotetext[2]{Institute for Applied Mathematics \& Interdisciplinary Centre  for Scientific Computing,
								 Universit\"at Heidelberg, 
                          69120 Heidelberg, Germany and
                          Department of Mathematical Sciences, University of Bath, Bath BA2 7AY UK.\\
                          \texttt{r.scheichl@uni-heidelberg.de}
                          }

\begin{abstract}
{Stochastic PDE eigenvalue problems often arise in the field of uncertainty quantification,
whereby one seeks to quantify the uncertainty in an eigenvalue, or its eigenfunction. 
In this paper we present an efficient multilevel quasi-Monte Carlo (MLQMC) algorithm
for computing the expectation of the smallest eigenvalue of an elliptic eigenvalue
problem with stochastic coefficients.
Each sample evaluation requires the solution of a PDE eigenvalue problem, and so
tackling this problem in practice is notoriously computationally difficult.
We speed up the approximation of this expectation in four ways: 
we use a multilevel variance reduction scheme to spread the work over a hierarchy of
FE meshes and truncation dimensions; 
we use QMC methods to efficiently compute the expectations on each level;
we exploit the smoothness in parameter space and reuse the
eigenvector from a nearby QMC point to reduce the number of iterations 
of the eigensolver; and  
we utilise a two-grid discretisation scheme to obtain the eigenvalue on the fine mesh with a 
single linear solve.
The full error analysis of a basic MLQMC algorithm is given in the
companion paper [Gilbert and Scheichl, 2022], and so in this paper we
focus on how to further improve the efficiency and provide theoretical 
justification for using nearby QMC points and two-grid methods.
Numerical results are presented that show the efficiency of our algorithm, and
also show that the four strategies we employ are complementary.}
\end{abstract} 

\section{Introduction}

In this paper we develop efficient methods for computing the expectation
of an eigenvalue of the stochastic eigenvalue problem (EVP)
\begin{align}
\label{eq:evp}
\nonumber 
-\nabla\cdot\big(a(\bsx, \bsy)\,\nabla u(\bsx, \bsy)\big) 
+ b(\bsx, \bsy)\,u(\bsx, \bsy)
&= 
\lambda(\bsy) \,c(\bsx,\bsy ) \,u(\bsx, \bsy), \ \  &\text{for } \bsx \in D,
\\
u(\bsx, \bsy) \,&=\, 0 \quad &\text{for }\bsx \in \partial D,
\end{align}
where the differential operator $\nabla$ is with respect to $\bsx$, 
which  belongs to the \emph{physical domain} $D \subset \R^d$ for $d = 1, 2, 3$.
Randomness is incorporated into the PDE \eqref{eq:evp} through the dependence
of the coefficients $a$, $b$ on the {stochastic parameter} 
$\bsy\,=\, (y_j)_{j \in \N} $,
which is a countably infinite-dimensional vector with i.i.d. uniformly distributed entries:
$y_j \sim \Uni[-\frac{1}{2}, \frac{1}{2}]$ for $j \in \N$.
The whole stochastic parameter domain is denoted by 
$\Omega \coloneqq [-\tfrac{1}{2}, \tfrac{1}{2}]^\N$.

The study of stochastic PDE problems is motivated by 
applications in uncertainty quantification---where one is interested in quantifying
how uncertain input data
affect model outputs. In the case of \eqref{eq:evp}
the uncertain input data are the coefficients $a$ and $b$, and the 
outputs of interest are the eigenvalue $\lambda(\bsy)$ and its corresponding
eigenfunction $u(\bsx, \bsy)$, 
which are now also stochastic objects.
As such, to quantify uncertainty we would like to compute statistics of the eigenvalue 
(or eigenfunction),
and in particular, in this paper we compute the expectation
of the smallest eigenvalue $\lambda$ with respect to the countable product of
uniform densities. This is formulated as the infinite-dimensional integral
\[
\bbE_\bsy[\lambda] \,=\, \int_{\Omega} \lambda(\bsy) \, \rd\bsy
\,\coloneqq\, \lim_{s \to \infty} \int_{[-\frac{1}{2}, \frac{1}{2}]^s}
\lambda(y_1, y_2, \ldots, y_s, 0, 0 \ldots)\, \rd y_1 \rd y_2 \cdots \rd y_s\,.
\]

EVPs corresponding to differential operators
appear in many applications from engineering and the
physical sciences, e.g., structural vibration analysis \cite{Thom81}, 
nuclear reactor criticality \cite{DH76,JC13} or 
photonic crystal structures \cite{D99,K01,NS12}.
In addition, stochastic EVPs, such as \eqref{eq:evp}, have recently 
garnered more interest
due to the desire to quantify the uncertainty present in such applications 
\cite{ShinAst72,AI10,W10,AEHW12,QiuLyu20}.
Thus, significant development has recently also gone into efficient 
numerical methods for 
tackling such stochastic EVPs in practice, 
the most common being Monte Carlo \cite{ShinAst72}, 
stochastic collocation \cite{AS12} and stochastic Galerkin/polynomial chaos
methods \cite{GhanGhosh07,W10}. 
The latter two classes perform poorly for high-dimensional problems, 
so in order to handle the high-dimensionality of the parameter
space, sparse and low-rank versions of those methods
have been developed, see e.g., \cite{AS12,HakKaarLaak15,ElmSu19}.
Furthermore, to improve upon classical Monte Carlo, while still performing well
in high dimensions, the present authors with their colleagues have analysed the use of
quasi-Monte Carlo methods \cite{GGKSS19,GGSS20}.

In practice, for each parameter value $\bsy \in \Omega$
the elliptic EVP \eqref{eq:evp} must be solved numerically, 
which we do here by the finite element (FE) method, see, e.g., \cite{BO91}. 
First, the spatial domain is discretised by a family of triangulations $\{\mathscr{T}_h\}_{h > 0}$
indexed by the meshsize $h> 0$, and then \eqref{eq:evp} is solved on the finite-dimensional
FE space corresponding to $\mathscr{T}_h$.
This leads to a large, sparse, symmetric matrix EVP, which is typically solved by
an iterative method (such as Rayleigh quotient iteration or the Lanczos algorithm, 
see, e.g., \cite{Parlett80,Saad11}), requiring several solves of a linear system.
To speed up the solution of each EVP we use
the {\em accelerated two-grid method} developed independently in
\cite{HuCheng11,HuCheng15_corr} and \cite{YangBi11}.
In particular, to obtain an eigenvalue approximation corresponding to
a ``fine'' mesh $\mathscr{T}_h$,
one first solves the FE EVP on a ``coarse'' mesh $\mathscr{T}_H$, 
with $H \gg h$, to obtain a coarse eigenpair $(\lambda_H, u_H)$.
An eigenvalue approximation $\lambda^h$ on the fine grid $\mathscr{T}_h$
is then obtained by performing a single step of shifted inverse iteration
with shift $\lambda_H$ and start vector $u_H$.
Typically, the fast convergence rates of FE methods and of shifted
inverse iteration for eigenvalue problems allow for a very large
difference between the coarse and fine meshsize, e.g., for piecewise 
polynomial FE spaces it is sufficient to take $H \eqsim h^{1/4}$,
so that the cost of the two-grid method essentially reduces to the
cost of a single linear solve on the fine mesh. 
Multilevel sampling schemes also exploit a hierarchy of FE meshes, 
and so the two-grid method very naturally fits into this framework.

The multilevel Monte Carlo (MLMC) method \cite{Giles08a,Hein01} is a variance reduction 
scheme that has achieved great success when applied to problems including path integration
 \cite{Hein01},
stochastic differential equations \cite{Giles08a} and also stochastic PDEs
\cite{BarSchwZol11,ClifGilSchTeck11}.
For stochastic PDE problems, it is based on a hierarchy
of $L + 1$ increasingly fine FE meshes $\{\mathscr{T}_{h_\ell}\}_{\ell = 0}^L$
(i,.e., the meshwidths are decreasing $h_0 > h_1 > \cdots > h_L> 0$),
and an increasing sequence of truncation dimensions $s_0 < s_1 < \cdots < s_L < \infty$
of the infinite-dimensional parameter domain $\Omega$.
For the eigenvalue problem \eqref{eq:evp},
letting the truncation-FE approximation on level $\ell$ be denoted by
$\lambda_\ell \coloneqq \lambda_{h_\ell, s_\ell}$,  
the key idea is to write the expectation on the desired finest level $L$
as a telescoping sum of differences:
\begin{equation}
\label{eq:tele_sum}
\bbE_\bsy [ \lambda_L] \,=\, \bbE_\bsy[\lambda_0] + \sum_{\ell = 1}^L \bbE_\bsy[\lambda_\ell - \lambda_{\ell - 1}],
\end{equation}
and then compute each expectation $\bbE_\bsy [\lambda_\ell - \lambda_{\ell - 1}]$
by an independent MC approximation. As $\ell \to \infty$, provided
$h_\ell \to 0$ and $s_\ell \to \infty$, we have $\lambda_\ell \to \lambda$ and hence also
$\lambda_\ell - \lambda_{\ell - 1} \to 0$. Thus the variance on each level decreases,
and so less samples will be needed on the finer levels.
In this way, the MLMC method achieves a significant cost reduction by spreading 
the work across the hierarchy of levels,
instead of performing all evaluations on the finest level $L$.
For any linear functional $\calG$, 
we can write a similar telescoping sum for
$\bbE_\bsy[\calG(u_L)]$, where we define $u_\ell \coloneqq u_{h_\ell, s_\ell}$.
The smallest eigenvalue is simple, therefore we can ensure that the corresponding 
eigenfunction is unique by normalising it and choosing the sign consistently. 
Similarly, we normalise each approximation $u_\ell$ and choose the sign to match
the eigenfunction, which ensures they are also well defined.
Our method can also be applied to approximate the expectation of other simple eigenvalues higher up the 
spectrum, without any essential modifications.
If the eigenvalue in question is well-separated from the rest of the spectrum (uniformly in $\bsy$) 
then our analysis can also be extended in a straightforward way.
However, for simplicity and clarity of presentation in this paper we focus on the smallest eigenvalue.

Quasi-Monte Carlo methods are equal-weight quadrature rules that are tailored
to efficiently approximate high-dimensional integrals, see, e.g., \cite{DKS13,DP10}.
In particular, by deterministically choosing well-distributed
quadrature points, giving preference to more important dimensions, QMC
rules can be constructed such that the error converges faster than for
MC  methods, whilst still being independent of dimension.
Using a QMC rule to approximate the expectation on each level in
\eqref{eq:tele_sum} instead of Monte Carlo gives a Multilevel quasi-Monte Carlo (MLQMC) method.
MLQMC methods were first developed in \cite{GilesWater09} 
for option pricing, and since then have also had great success for UQ in
stochastic PDE problems  \cite{GS20a,KSS15,KSSSU17}.
The gains are complementary,
so that for several problems MLQMC methods can be shown to result in faster convergence than 
either MLMC methods or single level QMC approximations.

In this paper, we present an efficient MLQMC method for computing the expectation
of the smallest eigenvalue of stochastic eigenvalue problems of the form \eqref{eq:evp}.
We employ four complementary strategies:
1) we use the ML strategy to reduce the variance and spread the work across
a hierarchy of FE meshes and truncation dimensions;
2) we use QMC methods to compute the expectation on each level
more efficiently;
3) we use the two-grid method for eigenvalue problems 
\cite{HuCheng11,HuCheng15_corr,YangBi11}
to compute the eigenpair on finer grids using an eigensolve on a very coarse grid
followed by a single linear solve; and
4) we reuse the eigenvector corresponding to a nearby QMC point as
the starting vector for the Rayleigh quotient algorithm to solve each eigenvalue problem.

The focus of this paper is on developing the above practical strategies to give
an efficient MLQMC method. A rigorous analysis of MLQMC methods for
the stochastic EVP \eqref{eq:evp} is the focus of a separate paper \cite{GS20a}.
However, we do give a theoretical justification of the enhancement
strategies 3) and 4). First, we extend the two-grid method and its
analysis to stochastic EVPs, allowing also for a reduced truncation dimension on
the ``coarse grid''.
Second, we analyse the benefit of using an eigenvector corresponding to a nearby QMC
point as the starting vector for the iterative eigensolve.

The structure of the paper is as follows. In Section~\ref{sec:background}
we present the necessary background material. Then in Section~\ref{sec:2grid}
we extend the two-grid method for deterministic EVPs to stochastic
EVPs and analyse the error.
In Section~\ref{sec:mlqmc-alg} we describe a basic MLQMC algorithm, and
then outline how one can reduce the cost by using two-grid methods and
nearby QMC points.
Finally, in Section~\ref{sec:num} we present numerical results for two test problems.

\section{Mathematical background}
\label{sec:background}
In this section we briefly summarise the relevant material
on variational EVPs, two-grid FE methods and QMC methods. For further details we refer the reader
to the references indicated throughout or to \cite{GGKSS19}.

We make the following assumptions on the physical domain and on the boundedness of the
coefficients (from above and below). These ensure the well-posedness of \eqref{eq:evp} 
and admit a fast convergence rate of our MLQMC algorithm.

\begin{assumption}
\label{asm:coeff}
\hfill
\begin{enumerate}
\item $D \subset \R^d$, for $ d = 1, 2, 3$, is bounded and convex.
\item\label{itm:coeff} $a$ and $b$ are of the form 
\begin{align}
\label{eq:coeff}
a(\bsx, \bsy) = a_0(\bsx) + \sum_{j = 1}^\infty y_j a_j(\bsx)
\quad \text{and} \quad
b(\bsx, \bsy) = b_0(\bsx) + \sum_{j = 1}^\infty y_j b_j(\bsx)\,,
\end{align}
where $a_j,\ b_j \in L^\infty(D)$, for all $j \ge 0$, and $c \in L^\infty(D)$
depend on $\bsx$ but not $\bsy$.
\item\label{itm:amin} There exists $\amin > 0$ such that $a(\bsx, \bsy) \geq \amin$,
$b(\bsx, \bsy) \geq 0$ and $c(\bsx) \geq \amin$, for all $\bsx \in D$, $\bsy \in \Omega$.
\item\label{itm:summable} There exists $p \in (0, 1)$ and $q \in (0, 1)$ such that
\begin{align*}
\sum_{j = 1}^\infty \max\big(\nrm{a_j}{L^\infty},\, \nrm{b_j}{L^\infty} \big)^p < \infty
\quad \text{and} \quad
\sum_{j = 1}^\infty \nrm{\nabla a_j}{L^\infty}^q \,<\, \infty\,.
\end{align*}
\end{enumerate}
\end{assumption}

For convenience, we then let $\amax < \infty$ be such that
\begin{equation}
\label{eq:coeff_bnd}
\max \big\{ \|a(\bsy)\|_{L^\infty}, \|\nabla a (\bsy)\|_{L^\infty}, 
\|b(\bsy)\|_{L^\infty}, \|c\|_{L^\infty}\big\}
\,\leq\, \amax,
\quad \text{for all } \bsy \in \Omega.
\end{equation}

\subsection{Variational eigenvalue problems}
\label{sec:var-evp}

For the variational form of the EVP \eqref{eq:evp},
we introduce the usual function space setting for second-order elliptic PDEs:
the first-order Sobolev space of functions with zero trace is denoted
by $V \coloneqq H^1_0(D)$
and equipped with the norm $\nrm{v}{V} \coloneqq \nrm{\nabla v
}{L^2}$. Its dual space is $V^* \coloneqq H^{-1}(D)$. We will also use the Lebesgue space $L^2(D)$,
equipped with the usual inner product $\langle \cdot, \cdot \rangle_{L^2}$,
and the induced norm $\|\cdot \|_{L^2}$.

Next, for each $\bsy \in \Omega$ define the bilinear form $\calA(\bsy) : V \times V \to \R$ by 
\[
\calA(\bsy; w, v) \,\coloneqq\, 
\int_D a(\bsx, \bsy) \nabla w(\bsx) \cdot \nabla v(\bsx)\rd \bsx 
+ \int_D b(\bsx, \bsy) w(\bsx) v(\bsx) \rd \bsx,
\]
which is also an inner product on $V$ and admits the induced norm
$\nrm{v}{\calA(\bsy)} \coloneqq \sqrt{\calA(\bsy; v, v)}$.
We define also the inner product $\calM : V \times V \to \R$ by 
\[
\calM(w, v) \,\coloneqq\, \int_D c(\bsx) w(\bsx) v(\bsx) \rd \bsx\,,
\]
again with induced norm given by $\nrm{v}{\calM} \coloneqq \sqrt{\calM(v, v)}$.
Further, let $\calM(\cdot, \cdot)$ also denote the duality paring on $V \times V^*$. 

The variational form of the EVP \eqref{eq:evp} is:
Find $\lambda(\bsy) \in \R$, $u(\bsy) \in V$ such that
\begin{align}
\label{eq:var-evp}
\calA(\bsy; u(\bsy), v) \,&=\, \lambda(\bsy) \calM(u(\bsy), v)
\quad \text{for all } v \in V\,,\\
\nonumber
\nrm{u(\bsy)}{\calM} \,&=\, 1.
\end{align}
The variational EVP \eqref{eq:var-evp} is symmetric and so it is well-known
that \eqref{eq:var-evp} admits countably many, strictly positive eigenvalues, see, e.g., 
\cite{BO91}.
The eigenvalues -- labelled in ascending order, counting
multiplicities -- and the corresponding eigenfunctions are denoted by
\[
0 \,<\, \lambda_1(\bsy) \,\leq\, \lambda_2(\bsy) \,\leq\, \cdots,
\quad \text{and} \quad 
u_1(\bsy), \ u_2(\bsy),\ \ldots\,.
\]

For $\bsy \in \Omega$, we define the solution operator $T = T(\bsy) : V^* \to V$ by
\[
\calA(\bsy; T f, v) \,=\, \calM(f, v )
\quad \text{for all } v \in V.
\]
Clearly, if $\lambda(\bsy)$ is an eigenvalue of \eqref{eq:var-evp} then 
$\mu(\bsy) = 1/\lambda(\bsy)$ is an eigenvalue of $T$
and the corresponding eigenspaces are the same.

The Krein--Rutmann Theorem ensures the smallest eigenvalue is simple,
and then in \cite[Prop.~2.4]{GGKSS19}  it was shown that
the spectral gap can be bounded away from 0 independently  of $\bsy$.
That is, there exists $\rho > 0$, independent of $\bsy$, such that
\begin{equation}
\label{eq:gap}
\lambda_2(\bsy) - \lambda_1(\bsy) \,\geq\, \rho 
\quad \text{for all } \bsy \in \Omega.
\end{equation}
The eigenfunctions $\{u(\bsy)\}_{k \in \N}$ can be chosen to form a basis for 
$V$ that is orthonormal with respect to
$\calM(\cdot, \cdot)$, and hence, by \eqref{eq:var-evp}, also orthogonal with 
respect to $\calA(\bsy; \cdot, \cdot)$. For $\bsy \in \Omega$, let the eigenspace
$E(\lambda_k(\bsy))$ be the subspace spanned by all eigenfunctions
corresponding to $\lambda_k(\bsy)$, and let 
$\Ehat(\lambda_k(\bsy)) \coloneqq \{v : v \in E(\lambda_k(\bsy)), \|v\|_V = 1\}$.

Since the coefficients are uniformly bounded away from 0 and from above, 
the $\calA(\bsy)$- and $\calM$-norms are equivalent to the $V$- and $L^2$-norms, 
respectively, with
\begin{align}
\label{eq:A_equiv}
c_\calA \nrm{v}{V} \,\leq\, &\nrm{v}{\calA(\bsy)} \, \leq\, 
C_\calA \nrm{v}{V}\,,\\
\label{eq:M_equiv}
c_\calM \nrm{v}{L^2} \,\leq\, &\nrm{v}{\calM} \,\leq\, C_\calM\nrm{v}{L^2},
\end{align}
where the constants are independent of $\bsy$, see \cite[eqs. (2.7), (2.8)]{GS20a} for their
explicit values.
By $C_\mathrm{Poin} > 0$ we denote the
Poincar\'e constant, which is independent of $\bsy$ and such that
\begin{align}\label{eq:poin}
\nrm{v}{L^2(D)} \,\leq\, C_\mathrm{Poin}\nrm{v}{V},
\quad \text{for all } v \in V.
\end{align}
 
For the remainder of the paper we denote 
the smallest eigenvalue and its corresponding eigenfunction
by $\lambda = \lambda_1$ and $u = u_1$, respectively.

\subsection{Stochastic dimension truncation}

In order to evaluate the stochastic coefficients $a(\bsy)$ and $b(\bsy)$ in practice,
we must first truncate the infinite-dimensional stochastic domain $\Omega$.
This is done by choosing a finite \emph{truncation dimension} $s \in
\N$  and by setting $y_j = 0$ for all $j > s$.
We define the following notation: $\bsy_s = (y_1, y_2, \ldots, y_s)$,
\[
a^s(\bsx, \bsy) \,\coloneqq\, a_0(\bsx) + \sum_{j = 1}^s y_j a_j(\bsx)
\quad \text{and} \quad
b^s(\bsx, \bsy) \,\coloneqq\, b_0(\bsx) + \sum_{j = 1}^s y_j b_j(\bsx).
\]
In this way, the truncated coefficients $a^s(\bsy)$ and $b^s(\bsy)$ can be evaluated
in practice, since they only depend on finitely many terms.

Similarly, the truncated approximations of the eigenvalue and eigenfunction are denoted by $\lambda_s(\bsy), u_s(\bsy)$, respectively.
Defining the bilinear form $\calA_s(\bsy) : V \times V \to \R$ corresponding to the
truncated coefficients by
\begin{equation}
\label{eq:coeff_s}
\calA_s(\bsy; w, v) \,\coloneqq 
\int_D a^s(\bsx, \bsy) \nabla w(\bsx) \cdot \nabla v(\bsx) \rd \bsx
+ \int_D b^s(\bsx, \bsy)w(\bsx) v(\bsx) \rd \bsx\,,
\end{equation}
we have that $\lambda_s(\bsy), u_s(\bsy)$ satisfy
\begin{equation}
\label{eq:trunc-evp}
\calA_s(\bsy; u_s(\bsy), v) \,=\, \lambda_s(\bsy) \calM(u_s(\bsy), v),
\quad \text{for all } v \in V\,.
\end{equation}

\subsection{Finite element methods for eigenvalue problems}
\label{sec:fem}
The eigenvalue problem \eqref{eq:var-evp} will be discretised in the spatial 
domain using piecewise linear finite elements (FE).
First, we partition the spatial domain $D$ using a family of shape regular triangulations
$\{\mathscr{T}_h\}_{h > 0}$, indexed by the meshwidth 
$h = \max\{\diam(\tau) : \tau \in \mathscr{T}_h\}$.

Then, for $h > 0$ let $V_h$ be the conforming FE space
of continuous functions that are piecewise linear on the elements of the 
triangulation $\mathscr{T}_h$, and let $M_h \coloneqq \dim (V_h) < \infty$ denote
the dimension of this space.
Additionally, we assume that each mesh $\mathscr{T}_h$ is such that the
dimension of the corresponding FE space $V_h$ is
\begin{equation}
\label{eq:fe_dof}
M_h \eqsim h^{-d},
\end{equation}
which will be satisfied by quasi-uniform meshes, but also allows for locally refined meshes.

For each $\bsy \in \Omega$, the FE eigenvalue problem is:
Find $\lambda_h(\bsy) \in \R$, $u_h(\bsy) \in V_h$ such that
\begin{align}
\label{eq:fe-evp}
\calA(\bsy; u_h(\bsy), v_h) \,&=\, \lambda_h(\bsy) \calM(u_h(\bsy), v_h)
\quad \text{for all } v_h \in V_h\,,\\
\nonumber
\nrm{u_h(\bsy)}{\calM} \,&=\, 1.
\end{align}
The FE eigenvalue problem \eqref{eq:fe-evp} admits
$M_h$ eigenvalues and corresponding eigenvectors
\[
0 \,<\, \lambda_{1, h}(\bsy) \,\leq\, \lambda_{2, h}(\bsy) \,\leq\, \cdots \,\leq\, \lambda_{M_h, h}(\bsy)\,,
\quad \text{and} \quad
u_{1, h}(\bsy),\ u_{2, h}(\bsy),\ \ldots,\ u_{M_h, h}(\bsy)\,,
\]
which converge from above to the first $M_h$
eigenvalues and eigenfunctions of \eqref{eq:var-evp} as $h \to 0$,
see, e.g., \cite{BO91} or \cite{GGKSS19} for the stochastic case.

As before, let $E(\lambda_{k, h}(\bsy))$ be the eigenspace corresponding
to $\lambda_{k, h}(\bsy)$ and define 
$\Ehat(\lambda_{k, h}(\bsy)) \coloneqq \{v \in E(\lambda_{k, h}(\bsy)) : \|v\|_V = 1\}$.
If the exact eigenvalue $\lambda_k(\bsy)$ has multiplicity~$m$ (and we
assume without loss of generality that
$\lambda_k(\bsy) = \lambda_{k + 1}(\bsy) =\cdots = \lambda_{k + m - 1}(\bsy)$), 
then there exist $m$ FE eigenvalues, 
$\lambda_{k, h}(\bsy),$ $ \lambda_{k + 1, h}(\bsy), \ldots, \lambda_{k + m - 1, h}(\bsy)$, 
that converge to $\lambda_k(\bsy)$, but are not necessarily equal.
As such, we also define $E_h(\lambda_k(\bsy))$ to be the direct sum of all the eigenspaces 
$E(\lambda_{\ell, h}(\bsy))$ such that $\lambda_{\ell, h}(\bsy) \to \lambda_k(\bsy)$.
Finally, we define 
$\Ehat_h(\lambda_k(\bsy) \coloneqq \{ v \in E_h(\lambda_k(\bsy)) : \|v\|_V = 1\}$.

In Assumption~A\ref{asm:coeff} we have only assumed that the physical domain $D$ is
convex and that $a \in W^{1, \infty}(D)$. Hence, piecewise linear FEs are sufficient to
achieve the optimal rates of convergence with respect to $h$ in general.
In particular, in \cite[Thm.~2.6]{GGKSS19} it was shown that the FE
error for the minimal eigenpair can be bounded independently of $\bsy$
with the usual rates in terms of $h$.
Explicitly, if $h > 0$ is sufficiently small, then for all $\bsy \in \Omega$ 
\begin{align}
\label{eq:fe_lam_u}
\nrm{u(\bsy) - u_h(\bsy)}{V} \,&\leq\, C_{u} h, \qquad 
|\lambda(\bsy) - \lambda_h(\bsy)| \,
\leq\, C_{\lambda} h^2,
\end{align}
and for $\calG \in H^{-1 + t}(D)$ with $ t \in [0, 1]$
\begin{equation}
\label{eq:fe_G}
\big|\calG(u(\bsy)) - \calG(u_h(\bsy))\big| \,\leq\,C_{\calG} \,h^{1 + t}\,,
\end{equation}
where $0 < C_{\lambda},\ C_{u},\ C_{\calG} < \infty$ are independent of $\bsy$ and $h$.

In the companion paper \cite{GS20a}, it is shown that for $h$
sufficiently 
small\footnote{The explicit condition is that $h \leq \hsuff$ with
  $\hsuff \coloneqq \sqrt{\rho/(2C_{\lambda})}$.}
the spectral gap of the FE eigenvalue problem \eqref{eq:fe-evp}
satisfies the uniform lower bound
\begin{equation}
\label{eq:gap_h}
\lambda_{2, h}(\bsy) - \lambda_{1, h}(\bsy)
\,\geq\,  \frac{\rho}{2} \,>\, 0,
\end{equation}
and that the eigenvalues and eigenfunctions of both \eqref{eq:var-evp} and
\eqref{eq:fe-evp} satisfy the bounds
\begin{align}
\label{eq:lam_bnd}
\underline{\lambda_k} \,\leq\, \lambda_k(\bsy) \,\leq\, \lambda_{k, h}(\bsy) \,&\leq\,\overline{\lambda_{k}}, \quad \text{and}\\
\label{eq:u_bnd}
\max\big\{\nrm{u_k(\bsy)}{V},\ \nrm{u_{k, h}(\bsy)}{V}\big\}\,&
\,\leq\, \overline{u_{k}},
\end{align}
where $\underline{\lambda_k}, \overline{\lambda_k}, \overline{u_k}$
are also independent of both $\bsy$ and $h$.

Note that the use of piecewise linear FEs is not a restriction
on our MLQMC methods. 
The algorithms presented in Section~\ref{sec:mlqmc-alg}
are very general, and will work with higher order FE methods as well,
without any modification of the overall algorithm structure.

\subsection{Iterative solvers for eigenvalue problems}

The discrete EVP \eqref{eq:fe-evp} from the previous section 
leads to a generalised matrix EVP of the form $A_h \bsu_h = \lambda_h B_h \bsu_h$,
where, in general, the matrices $A_h$ and $B_h$ are large, sparse and
symmetric positive definite.

Since we are only interested in computing a single eigenpair, we will use 
Rayleigh quotient (RQ) iteration to compute it. It is well-known that for 
symmetric matrices RQ iteration converges cubically
for almost all starting vectors, see, e.g., \cite{Parlett80}.

\subsection{Quasi-Monte Carlo integration}
\label{sec:qmc}
A quasi-Monte Carlo (QMC) method is an equal weight quadrature rule
\begin{equation}
\label{eq:qmc}
Q_{s, N} f \,=\, \frac{1}{N} \sum_{k = 0}^{N - 1} f(\bst_k)
\end{equation}
with $N \in \N$ deterministically-chosen quadrature points
$\{\bst_k\}_{k = 0}^{N - 1}$, as opposed to random quadrature
points as in Monte Carlo. The key feature of QMC methods is 
that the points are cleverly constructed to be
well-distributed within high-dimensional domains,
which allows for efficient approximation of high-dimensional integrals such as
\[
\calI_sf \,\coloneqq\, \int_{[- \frac{1}{2}, \frac{1}{2}]^s} f(\bsy) \rd \bsy\,.
\]
There are many different types of QMC point sets, and for further
details we refer the reader  to, e.g., \cite{DKS13}.

In this paper, we use a simple to construct, yet powerful, class of
QMC methods called \emph{randomly shifted rank-1 lattice rules}. 
A randomly shifted lattice rule approximation to $\calI_sf$ 
using $N$ points is given by
\begin{equation}
\label{eq:rqmc}
Q_{s, N}(\bsDelta)f \,\coloneqq\, \frac{1}{N} 
\sum_{k = 0}^{N - 1} f(\{t_{k}  + \bsDelta\} - \tfrac{\boldsymbol{1}}{\boldsymbol{2}})
\end{equation}
where $\bsz \in \N^s$ is the \emph{generating vector} and the points $\bst_k$ are
given by
\[
\bst_k = \bigg\{\frac{k\bsz}{N}\bigg\} \quad \text{for } k = 0, 1, \ldots, N - 1\,,
\]
$\bsDelta \in [0, 1)^s$ is a uniformly distributed \emph{random shift},
and $\{\cdot\}$ denotes the fractional part of each component of a vector.
Note that we have subtracted $1/2$ in each dimension 
to shift the  quadrature points from $[0, 1]^s$
to $[-\frac{1}{2}, \frac{1}{2}]^s$.

Good generating vectors can be constructed in practice using the 
\emph{component-by-component} (CBC) algorithm, 
 or the more efficient \emph{Fast CBC} construction \cite{NC06,NC06np}.
In fact, it can be shown that for functions 
in certain first-order weighted Sobolev spaces such as those introduced in \cite{SW98},
the root-mean-square (RMS) error of a randomly shifted lattice rule using a 
CBC-constructed generating vector achieves almost the optimal rate of $\bigO(N^{-1})$.

To state the CBC error bound, 
we briefly introduce the following specific class of weighted Sobolev 
spaces, which are useful for the analysis of lattice rules. Given a collection of  \emph{weights}
$\bsgamma \coloneqq \{\gamma_\setu > 0 : \setu \subseteq \{1, 2, \ldots, s\}\}$,
which represent the importance of different subsets of variables,
let $\Ws$ be the $s$-dimensional weighted (unanchored) Sobolev space of functions with 
square-integrable mixed first derivatives, equipped with the norm
\begin{equation}
\label{eq:W-norm}
\nrm{f}{\Ws}^2
\,=\, \sum_{\setu \subseteq \{1:s\}} \frac{1}{\gamma_\setu} \int_{[-\frac{1}{2}, \frac{1}{2}]^{|\setu|}} 
\bigg(\int_{[-\frac{1}{2}, \frac{1}{2}]^{s - |\setu|}} \pd{|\setu|}{}{\bsy_\setu} f(\bsy) \, \rd \bsy_{-\setu}\bigg)^2
\rd \bsy_\setu,
\end{equation}
where we use the notation $\{1:s\} = \{1, 2, \ldots, s\}$, $\bsy_\setu = (y_j)_{j \in \setu}$
and $\bsy_{-\setu} = (y_j)_{j \in \{1:s\} \setminus \setu}$.
Then, for $f \in \Ws$ and $N$ a power of 2, the RMS error of a CBC-constructed
randomly shifted lattice rule approximation satisfies
\begin{equation}
\label{eq:cbc_err}
\sqrt{\bbE_\bsDelta \big[ |\calI_s f - Q_{s, N}f|^2\big]} \,\lesssim\, N^{-1 + \delta}\nrm{f}{\Ws},
\qquad \delta > 0,
\end{equation}
where under certain conditions on the decay of the weights $\bsgamma$
the constant is independent of the dimension.
Note that similar results also hold for general $N$, but with $N$ 
on the RHS of \eqref{eq:cbc_err} replaced
by the Euler Totient function, which counts the number of integers less than and coprime to $N$, 
see, e.g., \cite[Theorem 5.10]{DKS13}.
For more details on the general theory of lattice rules see \cite{DKS13},
and for a theoretical analysis of randomly shifted lattice rules for MLQMC
applied to \eqref{eq:evp} see \cite{GS20a}.

The generating vectors given by the CBC algorithm are extensible in
dimension, however, they are constructed for a fixed value of $N$.
By modifying the error criterion that is minimised in each step of the CBC algorithm, 
one can construct a generating vector that works well for a range of values 
of $N$, where now $N$ is given as some power of a prime base, e.g., $N$ is a power of 2.
The resulting quadrature rule is called an \emph{embedded lattice rule}
and was developed in \cite{CKN06}.
Not only do embedded lattice rules work well for a range of values of $N$,
but the resulting point sets are nested. 
Hence, one can improve the accuracy of a previously computed embedded lattice rule
approximation by simply adding the function evaluations corresponding to the new points
to the sum from the previous approximation. 
As will be clear later, the extensibilty in both $s$ and $N$ of embedded lattice rules 
makes them extremely convenient for use in MLQMC methods in practice.
 
Currently there is not any theory for the error of embedded lattice rules,
however, a series of comprehensive numerical tests conducted in \cite{CKN06} 
show empirically that the optimal rate of $N^{-1}$ is still observed,
and that the worst-case error for an embedded lattice  increases
at most by a factor of 1.6 as compared to the normal CBC algorithm with $N$ fixed.

Finally, instead of using a single random shift, in practice it is
better to average over several randomly shifted approximations
that correspond to a small number of independent random shifts.
The practical benefits are (i) that averaging gives more consistent results,
by reducing the chance of using a single ``bad'' shift,
and (ii) that the sample variance of the shifted approximations 
provides a practical error estimate.
Let $\bsDelta^{(1)}, \bsDelta^{(2)}, \ldots,  \bsDelta^{(R)}$ be $R$ independent 
uniform random shifts, then the average of the QMC approximations
corresponding to the random shifts is
\[
\Qhat_{s, N, R}f \,\coloneqq\, \frac{1}{R} \sum_{r = 1}^R Q_{s, N}(\bsDelta^{(r)})f ,
\]
and the mean-square error of $\Qhat_{s, N, R}f$ can be estimated by the sample variance
\begin{equation}
\label{eq:sample_var}
\Vhat[\Qhat_{s, N, R}] \,\coloneqq\, \frac{1}{R(R - 1)} \sum_{r = 1}^R \big[\Qhat_{s, N, R} f - Q_{s, N}(\bsDelta^{(r)})f\big]^2.
\end{equation}

\subsection{Discrepancy theory}
\label{sec:disc}

Much of the modern theory for QMC rules is based on weighted function spaces
as discussed in Section~\ref{sec:qmc}, however, the traditional analysis of QMC rules
is based on the \emph{discrepancy} of the quadrature points.
Loosely speaking, for a given point set the discrepancy measures the difference between the 
number of points that actually lie within some subset of the unit cube and the number of points 
that are expected to lie in that subset if the point set were perfectly uniformly distributed.
This more geometric notion of the quality of a QMC point set will be
useful later when we analyse 
the use of an eigenvector corresponding to a nearby QMC point as the
starting vector for the RQ iteration.

We now recall some basic notation and definitions from the field of
discrepancy theory for a point set $\calP_N = \{\bst_0, \bst_1, \ldots, \bst_{N -
  1}\} \subset [0, 1]^s$ on the unit cube. Note
that by a simple translation the results from this section are also
applicable on $[-\frac{1}{2}, \frac{1}{2}]^s$.
The axis-parallel box with corners $\bsa, \bsb\in [0, 1]^s$ with $a_j
< b_j$ is denoted by $[\bsa, \bsb) \coloneqq [a_1, b_1) \times [a_2, b_2) \times
\cdots \times [a_s, b_s)$. The number of points from 
$\calP_N$ that lie in $[\bsa, \bsb)$ is denoted by
$|\{\calP_N \cap [\bsa, \bsb)\}|$ and the Lebesgue 
measure on $[0, 1]^s$  by $\calL_s$. 

\begin{definition}
\label{def:D*}
The \emph{star discrepancy} of a point set $\calP_N$ is defined by
\begin{equation}
\label{eq:D*}
D_N^*(\calP_N) \,\coloneqq\, \sup_{\bsb \in [0, 1]^s} 
\bigg| \frac{|\{\calP_N \cap [\bszero, \bsb)\}|}{N} - \calL_s\big([\bszero, \bsb)\big)\bigg|.
\end{equation}
$\calP_N$ is called a \emph{low discrepancy} point set if there exists $C_{\calP_N} < \infty$,
independent of $s$, such that
\begin{equation}
\label{eq:low-D_N}
D_N^*(\calP_N) \,\leq\, C_{\calP_N} \frac{\log (N)^{s - 1}}{N}.
\end{equation}
\end{definition}
There exist several well-known points sets that have low-discrepancy,
such as Hammersley point sets, see \cite{DP10} for more details.

The connection between star discrepancy and quadrature is given by the Koksma--Hlawka
inequality, which for a function $f$ with bounded Hardy--Krause variation states
that the quadrature error of a QMC approximation \eqref{eq:qmc}
satisfies the bound
\begin{equation}
\label{eq:KH}
\Bigg| \int_{[0, 1]^s} f(\bsy) \, \rd \bsy - Q_{s, N}f\Bigg|
\,\leq\, \Bigg(\sum_{\emptyset \neq \setu \subseteq \{1:s\}} 
\int_{[0, 1]^{|\setu|}} \bigg| \pd{|\setu|}{}{\bsy_\setu}
f(\bsy_\setu; \bsone)\bigg| \, \rd \bsy_\setu \Bigg) D^*_N(\calP_N),
\end{equation}
see, e.g., \cite{DP10}.
Here,  $(\bsy_\setu; \bsone)$ denotes the \emph{anchored} point
with $j$th component $y_j$ if $j \in \setu$ and $1$ otherwise. 
Hence, low-discrepancy point sets lead to QMC approximations
for which the error converges like $\bigO(\log(N)^{s - 1}/N)$.

Lattice rules can also be constructed such that
their discrepancy is $\log(N)^{s}/N$ (see \cite[Corollary
3.52]{DP10}). By the Koksma--Hlawka inequality \eqref{eq:KH}, 
they then admit error bounds similar to
\eqref{eq:cbc_err}, but with an extra $\log(N)^{s}$ factor.
By instead considering the weighted discrepancy, one can construct lattice rules
that have a weighted discrepancy (and similarly error bounds) without this $\log$ factor,
see \cite{Joe06}.

Finally, we also define the extreme discrepancy of a point set,
which removes the restriction that the boxes are anchored to the origin.
\begin{definition}
\label{def:D_ext}
The \emph{extreme discrepancy} of a point set $\calP_N$ is defined by
\begin{equation}
\label{eq:D_extreme}
\widehat{D}_N(\calP_N) \,\coloneqq\, \sup_{[\bsa, \bsb) \subset [0, 1]^s} 
\bigg| \frac{|\{\calP_N \cap [\bsa, \bsb)\}|}{N} - \calL_s\big([\bsa, \bsb)\big)\bigg|.
\end{equation}
\end{definition}

\section{Two-grid-truncation methods for stochastic EVPs}
\label{sec:2grid}
Two-grid FE discretisation methods for EVPs were
first introduced in \cite{XuZhou99} and later refined independently in 
\cite{HuCheng11, HuCheng15_corr} and \cite{YangBi11}.
The idea behind them is simple: 
to combine FE methods with iterative solvers for matrix EVPs.
Letting $H > h > 0 $ be the meshwidths of a coarse and a fine FE mesh, $\mathscr{T}_H$ and
$\mathscr{T}_h$, respectively, one first solves the EVP \eqref{eq:fe-evp}
on the coarse FE space $V_H$ to give $\lambda_H$, $u_H$. 
This coarse eigenpair ($\lambda_H$, $u_H$) is then used as the starting guess
for an iterative eigensolver for the EVP on the fine FE space $V_h$.
Since FE methods for PDE EVPs and iterative methods for matrix 
EVPs both converge very fast, $H$ and $h$ can be chosen such that
a single linear solve is all that is required to obtain the same order of accuracy as 
can be expected from solving the original FE EVP on the fine mesh. 
This strategy can be adapted to a full multigrid method
for EVPs as in, e.g., \cite{Xie14}.
However, it was shown in \cite{HuCheng11,HuCheng15_corr,YangBi11}
that the maximal ratio $H/h$ between the coarse and fine meshwidth in a two grid method 
is so large that, in general, two grids are sufficient.

Here, we present a new algorithm that extends the two-grid method to
stochastic (or parametric) EVPs, by also using a reduced  (i.e., cheaper and less accurate) truncation 
of the parameter space when solving the parametric EVP on the initial coarse mesh.
Since our new algorithm combines this truncation and FE approximations,
we first introduce some notation. For some $h > 0$ and $s \in \N$, the
FE EVP that approximates the truncated problem \eqref{eq:trunc-evp} is: 
Find  $\lambda_{h, s}(\bsy) \in \R$ and $u_{h, s} (\bsy) \in V_h$ such that
\begin{align}
\label{eq:trunc-fe-evp}
\calA_s(\bsy; u_{h, s}(\bsy), v_h) \,&=\, \lambda_{h, s}(\bsy) \calM(u_{h, s}(\bsy), v_h)
\quad \text{for all } v_h \in V_h\,,\\
\nrm{u_{h, s}(\bsy)}{\calM} \,&=\, 1\,.
\nonumber
\end{align}
We also define the solution operator $T_{h, s} = T_{h, s} (\bsy) : V^*
\to V_h$ for \eqref{eq:trunc-fe-evp}, which for $f \in V^*$ satisfies
\begin{equation}
\label{eq:T_h,s}
\calA_s(\bsy; T_{h, s}  f, v_h) \,=\, \calM(f, v_h) \quad \text{for all } v_h \in V_h,
\end{equation}
and the $\calA_s(\bsy)$-orthogonal projection operator 
$P_{h, s} = P_{h, s}(\bsy): V \to V_h$, which for $u \in V$ satisfies
\begin{equation}
\label{eq:P_h,s}
\calA_s(\bsy; u - P_{h, s} u, v_h) \,=\, 0 \quad \text{for all } v_h \in V_h.
\end{equation}
Although both operators depend on $\bsy$ we will not specify this dependence.

In Algorithm~\ref{alg:2grid} below we detail our new two-grid and truncation method for
parametric EVPs.
The algorithm is based on the accelerated version of the two-grid
algorithm (see \cite{HuCheng11,HuCheng15_corr} and also
\cite{YangBi11}), which uses the shifted-inverse power method for the update step. 
In addition, we add a normalisation step
so that $\nrm{u^h(\bsy)}{\calM} = 1$, which simplifies the RQ update
but does not affect the theoretical results.
As in the papers above, we will consistently use the notation that
two-grid approximations use superscripts whereas ordinary approximations (i.e., eigenpairs of
truncated/FE problems) use subscripts. 
To perform step 2 in practice, one must interpolate the $u_{H, S}(\bsy)$
at the nodes of the fine mesh $\calT_{h}$ to obtain the corresponding start vector.
Since we use piecewise linear FE methods, linear interpolation is sufficient.

\begin{algorithm}[!h]
\caption{Two-grid-truncation method for parametric EVPs}
\label{alg:2grid}
Given $H > h > 0$, $0 < S < s$  and $\bsy \in \Omega$:
\begin{algorithmic}[1]
\State Find $\lambda_{H, S}(\bsy) \in \R$ and $u_{H, S}(\bsy) \in V_H$ such that
\begin{align*}
\calA_S(\bsy; u_{H, S}(\bsy), v_H) \,&=\, \lambda_{H, S}(\bsy) \calM(u_{H, S}(\bsy), v_H)\,
\quad \text{for all } v_H \in V_H,\\
\nrm{u_{H, S}(\bsy)}{\calM} \,&=\, 1\,.
\end{align*}
\State Find $u^{h, s} \in V_h$ such that
\begin{equation}
\label{eq:2grid_source}
\calA_s(\bsy; u^{h, s}(\bsy), v_h) - \lambda_{H, S}(\bsy) \calM(u^{h, s}(\bsy), v_h) 
\,=\, \calM(u_{H, S}(\bsy), v_h)
\quad \text{for all } v_h \in V_h. 
\end{equation}
\State $u^{h, s}(\bsy) \leftarrow u^{h, s}(\bsy)/ \nrm{u^{h, s}(\bsy)}{\calM}$
\Comment normalise the eigenfunction approximation
\State 
\begin{equation}
\label{eq:RQ_update}
\lambda^{h, s}(\bsy) \,=\, \calA_s(\bsy; u^{h, s}(\bsy), u^{h, s}(\bsy))\,.
\end{equation}
\end{algorithmic}
\end{algorithm}

The following lemmas will help us to extend the error analysis of two-grid methods to include a
component that corresponds to truncating the parameter dimension.

\begin{lemma}
\label{lem:RQ-diff}
Let $\calB,\ \widetilde{\calB} : V \times V \to \R$, be two bounded, coercive, symmetric bilinear forms.
Suppose that $(\lambda, u)$ is an eigenpair of
\[
\calB(u, v) \,=\, \lambda \calM(u, v) \quad \text{for all } v \in V\,,
\]
and let $w \in V$. Then
\begin{equation}
\label{eq:RQ-diff}
\frac{\widetilde{\calB}(w, w)}{\calM(w, w)} - \lambda \,=\, 
\frac{\|u - w\|^2_{\widetilde{\calB}}}{\|w\|_\calM^2}
-\lambda \frac{\|u - w\|_\calM^2}{\|w\|_\calM^2}
+ \frac{1}{\|w\|_\calM^2}\big( \calB(u, u - 2w) - \widetilde{\calB}(u, u - 2w)\big)\,.
\end{equation}
\end{lemma}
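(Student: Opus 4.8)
The plan is to verify the identity \eqref{eq:RQ-diff} by a direct algebraic manipulation, starting from the right-hand side and working towards the left, using only bilinearity, symmetry, and the eigenvalue relation $\calB(u,v) = \lambda\calM(u,v)$ for all $v \in V$. Since every term on the right carries the common denominator $\|w\|_\calM^2 = \calM(w,w)$, the key is to show that the sum of numerators equals $\widetilde{\calB}(w,w) - \lambda\calM(w,w)$.

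First I would expand the three numerators. Writing $\|u-w\|_{\widetilde{\calB}}^2 = \widetilde{\calB}(u,u) - 2\widetilde{\calB}(u,w) + \widetilde{\calB}(w,w)$ and $\|u-w\|_\calM^2 = \calM(u,u) - 2\calM(u,w) + \calM(w,w)$, and expanding $\calB(u,u-2w) = \calB(u,u) - 2\calB(u,w)$ together with the analogous expansion of $\widetilde{\calB}(u,u-2w)$, the combined numerator becomes
\[
\widetilde{\calB}(u,u) - 2\widetilde{\calB}(u,w) + \widetilde{\calB}(w,w)
- \lambda\big(\calM(u,u) - 2\calM(u,w) + \calM(w,w)\big)
+ \calB(u,u) - 2\calB(u,w) - \widetilde{\calB}(u,u) + 2\widetilde{\calB}(u,w).
\]
The $\widetilde{\calB}(u,u)$ terms cancel, as do the $\widetilde{\calB}(u,w)$ terms, leaving
\[
\widetilde{\calB}(w,w) - \lambda\calM(w,w)
+ \big(\calB(u,u) - \lambda\calM(u,u)\big)
- 2\big(\calB(u,w) - \lambda\calM(u,w)\big).
\]
Now I invoke the eigenvalue equation with test function $v = u$, giving $\calB(u,u) = \lambda\calM(u,u)$, so the third group vanishes; and with test function $v = w$, giving $\calB(u,w) = \lambda\calM(u,w)$ (here symmetry of $\calB$ is what lets me read $\calB(u,w)$ as $\calB(w,u)$ if needed, though with $v=w$ it is immediate), so the last group vanishes as well. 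What remains is exactly $\widetilde{\calB}(w,w) - \lambda\calM(w,w)$. Dividing by $\|w\|_\calM^2$ recovers $\widetilde{\calB}(w,w)/\calM(w,w) - \lambda$, which is the left-hand side.

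There is no real obstacle here: the statement is an exact identity and the proof is bookkeeping. The only point requiring a moment's care is that $w$ is an \emph{arbitrary} element of $V$ (not an eigenfunction), so all cancellations must come either from pure algebra or from testing the eigenvalue relation against the specific vectors $u$ and $w$ — one cannot use any relation of the form $\widetilde{\calB}(u,\cdot) = \lambda\calM(u,\cdot)$, since $\widetilde{\calB}$ need not share eigenpairs with $\calB$. Coercivity and boundedness of the bilinear forms play no role in the identity itself; they are only needed so that the Rayleigh quotients $\widetilde{\calB}(w,w)/\calM(w,w)$ and the norms $\|\cdot\|_{\widetilde{\calB}}$, $\|\cdot\|_\calM$ are well-defined (in particular $\calM(w,w) > 0$ for $w \neq 0$, and $\widetilde{\calB}(w,w) \geq 0$ so that $\|\cdot\|_{\widetilde{\calB}}$ is a genuine seminorm).
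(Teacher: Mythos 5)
Your proof is correct and is essentially the same direct algebraic verification as in the paper: expand the (semi)norm terms, cancel, and use the eigenvalue relation tested against $u$ and $w$; the paper merely organizes the computation by starting from $\|u-w\|_{\widetilde{\calB}}^2 - \lambda\|u-w\|_\calM^2$ rather than from the full right-hand side. No substantive difference.
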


\begin{proof}
Expanding,
then using the fact that $(\lambda, u)$ is an eigenpair gives
\begin{align*}
\|u - w\|_{\widetilde{\calB}}^2 - &\lambda \|u - w\|_\calM^2 \\
=\,
&\widetilde{\calB}(u, u) + \widetilde{\calB}(w, w) - 2\widetilde{\calB}(u, w)
-\lambda \calM(u, u) - \lambda\calM(w, w) + 2\lambda\calM(u, w)\\
=\, & \widetilde{\calB}(u, u) + \widetilde{\calB}(w, w) - 2\widetilde{\calB}(u, w)
-\calB(u, u) - \lambda\calM(w, w) + 2\calB(u, w)\\
=\, &  \widetilde{\calB}(w, w) - \lambda\calM(w, w) 
- \calB(u, u - 2w) + \widetilde{\calB}(u, u - 2w)\,.
\end{align*}
Dividing by $\|w\|_\calM^2$ and rearranging leads to the desired result.
\end{proof}

\begin{lemma}
Let Assumption~A\ref{asm:coeff} hold, then
\begin{equation}
\label{eq:T-T_hs}
\|T - T_{h, s} \| \,\leq\, C_T (s^{-1/p + 1} + h),
\end{equation}
where $C_T$ is independent of $\bsy$, $s$ and $h$.
\end{lemma}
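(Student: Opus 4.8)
The plan is to bound the operator norm $\|T - T_{h,s}\|$ by splitting it via the triangle inequality into a pure truncation error and a pure finite-element error:
\[
\|T - T_{h,s}\| \,\leq\, \|T - T_s\| + \|T_s - T_{h,s}\|,
\]
where $T_s = T_s(\bsy) : V^* \to V$ is the solution operator associated with the truncated bilinear form $\calA_s(\bsy;\cdot,\cdot)$, i.e. $\calA_s(\bsy; T_s f, v) = \calM(f, v)$ for all $v \in V$. The term $\|T_s - T_{h,s}\|$ is the standard Galerkin (Céa-type) FE error for the source problem with coefficient $a^s$, $b^s$; since the truncated coefficients satisfy the same uniform bounds \eqref{eq:coeff_bnd} and uniform coercivity \eqref{eq:A_equiv} as the untruncated ones (with constants independent of $s$ and $\bsy$), the usual duality/Aubin--Nitsche argument on the convex domain $D$ gives $\|T_s - T_{h,s}\| \leq C h$ with $C$ independent of $\bsy$, $s$, $h$. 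This is the routine half.

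For the truncation term $\|T - T_s\|$, I would proceed as follows. Fix $f \in V^*$ with $\|f\|_{V^*} = 1$ and set $w \coloneqq T f - T_s f \in V$. Subtracting the defining identities for $T$ and $T_s$ gives the error equation
\[
\calA_s(\bsy; w, v) \,=\, \calA(\bsy; T f, v) - \calA_s(\bsy; T f, v)
\,=\, \int_D \Big(\textstyle\sum_{j > s} y_j a_j\Big)\nabla (Tf)\cdot\nabla v \,\rd\bsx
+ \int_D \Big(\textstyle\sum_{j > s} y_j b_j\Big)(Tf)\,v\,\rd\bsx
\]
for all $v \in V$. Testing with $v = w$, using uniform coercivity $\calA_s(\bsy; w, w) \geq c_\calA^2 \|w\|_V^2$ on the left, and bounding the right-hand side by $\big(\sum_{j>s}\|a_j\|_{L^\infty} + C_\mathrm{Poin}^2\sum_{j>s}\|b_j\|_{L^\infty}\big)\|Tf\|_V\|w\|_V$ (with $|y_j| \leq \tfrac12$), we get
\[
\|w\|_V \,\lesssim\, \Big(\textstyle\sum_{j > s}\max(\|a_j\|_{L^\infty},\|b_j\|_{L^\infty})\Big)\,\|Tf\|_V.
\]
Since $\|Tf\|_V$ is bounded uniformly in $\bsy$ (by coercivity of $\calA(\bsy)$ and boundedness of $\calM$), and since Assumption~A\ref{asm:coeff}\eqref{itm:summable} gives $\sum_{j} \max(\|a_j\|_{L^\infty},\|b_j\|_{L^\infty})^p < \infty$, a standard tail estimate for $p$-summable sequences yields $\sum_{j > s}\max(\|a_j\|_{L^\infty},\|b_j\|_{L^\infty}) \lesssim s^{-1/p + 1}$. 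Taking the supremum over $f$ gives $\|T - T_s\| \leq C_T' s^{-1/p+1}$, with $C_T'$ independent of $\bsy$ and $s$.

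Combining the two bounds gives \eqref{eq:T-T_hs}. The only mild subtlety — and the step I would be most careful about — is keeping every constant genuinely uniform in $\bsy$: one must verify that the coercivity constant $c_\calA$ in \eqref{eq:A_equiv} and the bound on $\|Tf\|_V$ do not degrade when passing to the truncated coefficients, which follows because Assumption~A\ref{asm:coeff}\eqref{itm:amin} bounds $a^s(\bsx,\bsy) \geq \amin$ from below (the omitted tail terms only make $a$ larger or smaller in a way still controlled by $\amin$ and $\amax$) and similarly $b^s \geq 0$ need not hold term-by-term but the relevant lower bound on $\calA_s$ uses only the diffusion part. Apart from that, everything is a standard combination of the Lax--Milgram estimate, the Aubin--Nitsche duality argument on a convex domain, and a tail bound for $p$-summable coefficient sequences.
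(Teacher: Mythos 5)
Your decomposition is exactly the paper's: split $\|T - T_{h,s}\|$ by the triangle inequality into the truncation error $\|T - T_s\|$ and the FE error $\|T_s - T_{h,s}\|$. The difference is what happens next: the paper's proof is essentially a two-line citation, observing that the operator $-\nabla\cdot(a(\bsy)\nabla\cdot)+b(\bsy)$ fits the abstract affine-parametric framework of \cite{DKLeGNS14} and quoting \cite[Theorem 2.6 and eq.~(2.17)]{DKLeGNS14} for the two bounds, whereas you prove both halves directly (Lax--Milgram plus an error equation and a tail bound for the truncation part; C\'ea/Aubin--Nitsche on the convex domain for the FE part). Your route is more self-contained and makes the uniformity in $\bsy$ visible, at the cost of redoing standard estimates; the paper's route is shorter but hides the constants in the cited framework. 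One simplification available to you: uniform coercivity and $b^s\geq 0$ for the truncated forms need no separate discussion, because $a^s(\bsx,\bsy)=a(\bsx,(\bsy_s;\bszero))$ and $b^s(\bsx,\bsy)=b(\bsx,(\bsy_s;\bszero))$ with the anchored point $(\bsy_s;\bszero)\in\Omega$, so Assumption~A\ref{asm:coeff}.\ref{itm:amin} and \eqref{eq:A_equiv} apply verbatim; your fallback remark about using only the diffusion part is unnecessary (and, taken literally, would not by itself give coercivity if $b^s$ could be negative). Also note a harmless sign slip in your error equation ($\calA_s(\bsy;w,v)=\calA_s(\bsy;Tf,v)-\calA(\bsy;Tf,v)$), and that the tail estimate $\sum_{j>s}\max(\|a_j\|_{L^\infty},\|b_j\|_{L^\infty})\lesssim s^{-1/p+1}$ is the Stechkin-type bound the paper quotes elsewhere from \cite[Theorem 5.1]{KSS12}, which implicitly uses a (re)ordering of the coefficients.

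The one point you should repair is the normalisation $f\in V^*$, $\|f\|_{V^*}=1$, for the FE half. With $f$ only in $H^{-1}(D)$, $T_sf$ has no $H^2$ regularity, and the Aubin--Nitsche/C\'ea argument does not deliver the rate $h$ in the $V$-norm; the bound $\|T_s-T_{h,s}\|\leq Ch$ is false in the $\calL(V^*,V)$ operator norm. The estimate you want (and the only one the paper ever uses, e.g.\ when the lemma is applied to $u_{H,S}(\bsy)\in V$ in the appendix) is the operator norm on $\calL(L^2(D),V)$, or equivalently $\calL(V,V)$ after embedding: for $f\in L^2(D)$ one has $T_sf\in H^2(D)$ uniformly in $\bsy$ by convexity of $D$ and the uniform $W^{1,\infty}$ bound \eqref{eq:coeff_bnd}, and then C\'ea plus interpolation gives $\|T_sf-T_{h,s}f\|_V\lesssim h\|f\|_{L^2}$. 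Your truncation argument goes through unchanged with this normalisation (it is in fact easier, since $\|Tf\|_V\lesssim\|f\|_{L^2}$). So the proof is correct once the operator norm in \eqref{eq:T-T_hs} is read as $\calL(L^2(D),V)$ (or $\calL(V,V)$), which is also the reading consistent with how the paper uses the lemma.
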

\begin{proof}
The differential operator $A(y)v = -\nabla \cdot (a(\bsy) \nabla v) + b(\bsy) v$
from the EVP \eqref{eq:evp} fits into the general framework of \cite{DKLeGNS14}.
Defining $T_s \coloneqq T(\bsy_s)$ to be the solution operator 
for the truncated EVP \eqref{eq:trunc-evp}, it follows from the triangle inequality that
\[
\|T - T_{h, s} \| \,\leq\, \|T - T_s\| + \|T_s - T_{h, s}\| \,\leq\, C_1 s^{-1/p + 1} + C_2 h,
\]
where we have used \cite[Theorem 2.6 and eq. (2.17)]{DKLeGNS14} in the
last step.
\end{proof}

The error of the outputs of Algorithm~\ref{alg:2grid} are given in the theorem below.
The proof follows a similar proof technique as used in \cite{YangBi11}, and
also relies on an abstract approximation result for operators from that paper.
Note that the FE component of the error is the same as the results in
\cite{HuCheng11,HuCheng15_corr,YangBi11}, but here we have extra terms
corresponding to the truncation error.
The proof is deferred to the appendix.

\begin{theorem}
\label{thm:2grid}
Suppose that Assumption~A\ref{asm:coeff} holds, let $S \in \N$ be sufficiently large and
let $H > 0$ be sufficiently small. Then, for $ s > S$ and $0 < h < H$, 
\begin{align}
\label{eq:tg_u_err}
\nrm{u(\bsy) - u^{h, s}(\bsy)}{V} \,&\lesssim\, H^4 + h + S^{-2(1/p - 1)} 
+ s^{-(1/p - 1)} + H^2S^{-(1/p - 1)}\,, 
\quad \text{and}\\
\label{eq:tg_lam_err}
|\lambda(\bsy) - \lambda^{h, s}(\bsy)| \,& \lesssim\, H^8 + h^2 + S^{-4(1/p - 1)} + s^{-(1/p - 1)}
+ H^4 S^{-2(1/p - 1)},
\end{align}
where both constants are independent of $s, S, h, H$ and $\bsy$.
\end{theorem}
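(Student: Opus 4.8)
The plan is to split the total error into a truncation component and a two-grid component, and to treat the two-grid component by imitating the deterministic analysis of \cite{YangBi11}, while controlling all constants uniformly in $\bsy$ via the bounds \eqref{eq:gap_h}, \eqref{eq:lam_bnd}, \eqref{eq:u_bnd} and \eqref{eq:T-T_hs}. First I would introduce the intermediate eigenpair $(\lambda_s(\bsy), u_s(\bsy))$ of the truncated (but not discretised) problem \eqref{eq:trunc-evp}, together with its fine-grid FE approximation $(\lambda_{h,s}(\bsy), u_{h,s}(\bsy))$ from \eqref{eq:trunc-fe-evp}. By the triangle inequality,
\[
\nrm{u(\bsy) - u^{h,s}(\bsy)}{V} \,\leq\, \nrm{u(\bsy) - u_s(\bsy)}{V} + \nrm{u_s(\bsy) - u^{h,s}(\bsy)}{V},
\]
and the first term is $O(s^{-(1/p-1)})$ by the truncation estimate implicit in \eqref{eq:T-T_hs} (applied with $h \to 0$, or directly from \cite{DKLeGNS14}); an analogous split with squares handles the eigenvalue. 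So the real work is to bound $\nrm{u_s - u^{h,s}}{V}$ and $|\lambda_s - \lambda^{h,s}|$, i.e. to show that one step of shifted inverse iteration on the fine space $V_h$, started from the \emph{coarse, coarsely-truncated} eigenpair $(\lambda_{H,S}, u_{H,S})$, recovers the fine-truncated eigenpair $(\lambda_{h,s}, u_{h,s})$ up to the claimed accuracy.

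For that step I would first quantify how good the starting data $(\lambda_{H,S}, u_{H,S})$ is as an approximation to $(\lambda_s, u_s)$: by the triangle inequality and \eqref{eq:T-T_hs}, $\nrm{u_s - u_{H,S}}{V} \lesssim H + S^{-(1/p-1)}$ and $|\lambda_s - \lambda_{H,S}| \lesssim H^2 + S^{-2(1/p-1)}$ (the squared rate for the eigenvalue coming from the standard variational argument, using the Rayleigh-quotient identity of Lemma~\ref{lem:RQ-diff} applied with $\calB = \widetilde{\calB} = \calA_S(\bsy)$ or $\calA_s(\bsy)$ and $w = u_{H,S}$). Next, rewrite step 2 of Algorithm~\ref{alg:2grid}: the equation \eqref{eq:2grid_source} says $(\calA_s(\bsy) - \lambda_{H,S}\calM)$ applied to $u^{h,s}$ equals $\calM u_{H,S}$ on $V_h$, i.e. $u^{h,s}$ (before normalisation) is $(T_{h,s}^{-1} - \lambda_{H,S})^{-1}$ applied to $u_{H,S}$ in the appropriate discrete sense; expanding $u_{H,S}$ in the $\calM$-orthonormal eigenbasis $\{u_{k,h,s}(\bsy)\}$ of the fine FE-truncated problem and using the uniform spectral gap \eqref{eq:gap_h} (which transfers to $\calA_s$ since $s > S$ and the truncated gap is also bounded below — this needs the companion-paper bound, cf.\ the footnote condition on $H$) shows that the component of $u^{h,s}$ orthogonal to $u_{1,h,s}$ is damped by a factor $\lesssim |\lambda_s - \lambda_{H,S}|/\rho \cdot \nrm{u_s - u_{H,S}}{V}$, giving
\[
\nrm{u_{h,s} - u^{h,s}}{V} \,\lesssim\, |\lambda_s - \lambda_{H,S}| \cdot \nrm{u_s - u_{H,S}}{V} + \nrm{u_s - u_{h,s}}{V} \,\lesssim\, (H^2 + S^{-2(1/p-1)})(H + S^{-(1/p-1)}) + h,
\]
and after expanding the product and absorbing cross terms this is dominated by $H^4 + h + S^{-2(1/p-1)} + H^2 S^{-(1/p-1)} + s^{-(1/p-1)}$, which is \eqref{eq:tg_u_err}. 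The sharpening from $H^3$ to $H^4$ (and the mixed terms) comes from the extra factor gained in the shifted-inverse step combined with the FE eigenvalue rate $H^2$ rather than $H$ — exactly as in \cite{HuCheng11,YangBi11}; I would invoke their abstract operator-approximation lemma for the clean bookkeeping here.

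Finally, for the eigenvalue I would apply Lemma~\ref{lem:RQ-diff} with $\calB = \widetilde{\calB} = \calA_s(\bsy)$, the eigenpair $(\lambda_{h,s}, u_{h,s})$ in place of $(\lambda, u)$ — so the last bracket vanishes — and $w = u^{h,s}$, giving
\[
\lambda^{h,s}(\bsy) - \lambda_{h,s}(\bsy) \,=\, \frac{\nrm{u_{h,s} - u^{h,s}}{\calA_s(\bsy)}^2}{\nrm{u^{h,s}}{\calM}^2} - \lambda_{h,s}\frac{\nrm{u_{h,s} - u^{h,s}}{\calM}^2}{\nrm{u^{h,s}}{\calM}^2},
\]
so $|\lambda^{h,s} - \lambda_{h,s}| \lesssim \nrm{u_{h,s} - u^{h,s}}{V}^2$ using \eqref{eq:A_equiv}, \eqref{eq:M_equiv}, the normalisation $\nrm{u^{h,s}}{\calM} = 1$, and the eigenvalue upper bound $\overline{\lambda_1}$ from \eqref{eq:lam_bnd}; combining with the $u$-estimate and with $|\lambda_s - \lambda_{h,s}| \lesssim h^2$, $|\lambda - \lambda_s| \lesssim s^{-(1/p-1)}$ squares every term in \eqref{eq:tg_u_err} and yields \eqref{eq:tg_lam_err}. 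The main obstacle I anticipate is bookkeeping: tracking the three small parameters $h$, $H$, $S$ (plus $s$) through the shifted-inverse step so that the cross terms collapse into exactly the stated five monomials, and checking at each stage that every constant is genuinely $\bsy$-independent — which forces careful use of the uniform gap \eqref{eq:gap_h}/\eqref{eq:gap_h} and the uniform bounds \eqref{eq:lam_bnd}--\eqref{eq:u_bnd}, and also requires that the "sufficiently small $H$" and "sufficiently large $S$" hypotheses be strong enough that the shift $\lambda_{H,S}$ stays well inside the spectral gap uniformly in $\bsy$.
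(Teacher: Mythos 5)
Your overall strategy is the same as the paper's: peel off the dimension-truncation error, recast Step~2 of Algorithm~\ref{alg:2grid} as a shifted-inverse operator equation and invoke the abstract two-grid estimate of \cite{YangBi11} (their Theorem~3.2), checking its hypotheses uniformly in $\bsy$ via \eqref{eq:gap}, \eqref{eq:lam_bnd}, \eqref{eq:u_bnd} and \eqref{eq:T-T_hs}, and finish the eigenvalue bound with the Rayleigh-quotient identity of Lemma~\ref{lem:RQ-diff}. (Your eigenvalue step triangulates through $\lambda_s$ and $\lambda_{h,s}$ and applies the lemma with $\calB=\widetilde{\calB}=\calA_s$ restricted to $V_h$, whereas the paper applies it once with the exact pair $(\lambda,u)$ and $\widetilde{\calB}=\calA_s$, absorbing the truncation through the extra bracket; both routes work.)

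There are, however, two concrete errors in your sketch, and they sit exactly where the $H^4$ and $H^2S^{-(1/p-1)}$ terms must come from. First, the claim $|\lambda_s-\lambda_{H,S}|\lesssim H^2+S^{-2(1/p-1)}$ is false: the dimension-truncation error of the eigenvalue is only $O(S^{-(1/p-1)})$, the \emph{same} order as for the eigenfunction (the paper stresses this right after the theorem), and your proposed derivation via Lemma~\ref{lem:RQ-diff} with $\calB=\calA_s$, $\widetilde{\calB}=\calA_S$ cannot give a squared rate because the bracket $\calB(u,u-2w)-\widetilde{\calB}(u,u-2w)$ contributes linearly in the coefficient tail, i.e.\ $O(S^{-(1/p-1)})$. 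Second, your displayed damping bound multiplies the shift error by the $V$-norm starting error $\|u_s-u_{H,S}\|_V\lesssim H+S^{-(1/p-1)}$, which only yields $H^3$, and $H^3$ is \emph{not} dominated by $H^4+H^2S^{-(1/p-1)}+S^{-2(1/p-1)}$; asserting that the abstract lemma "sharpens" this is where the real work is hidden. The actual mechanism is that the second factor in the Yang--Bi estimate is the residual $\|u_{h,s}-\lambda_{H,S}T_{h,s}u_{H,S}\|_V$, which is controlled by eigenvalue differences plus eigenfunction differences measured in the $\calM$ (i.e.\ $L^2$) norm, and the $L^2$ FE eigenfunction error is $O(H^2)$ by an Aubin--Nitsche duality argument (in the paper via \cite[eq.~(2.35)]{GGKSS19} with a suitably normalised functional); then \emph{both} factors are $O(H^2+S^{-(1/p-1)}+h^2+s^{-(1/p-1)})$ and their product gives exactly the stated monomials. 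Without this duality step your argument as written proves only the weaker $H^3$ rate. A smaller omission: applying the abstract lemma uniformly in $\bsy$ also requires a $\bsy$-independent lower bound on the normalisation factor $\lambda_{H,S}(\bsy)\|T_{h,s}u_{H,S}(\bsy)\|_V$, so that $\dist(u_0,E_h(\lambda_s(\bsy)))\le\tfrac12$ can be verified; the paper establishes this separately using \eqref{eq:T-T_hs} and \eqref{eq:lam_bnd}.
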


It follows that in our two-grid-truncation method,
to maintain the optimal order $h$ convergence for the eigenfunction we should
take $H \eqsim h^{1/4}$, $s \eqsim h^{-p/(1 - p)}$ and $S \eqsim s^{1/2} $,
whereas for the eigenvalue error we should take a higher truncation dimension, namely
$s \eqsim h^{-2p/(1 - p)}$ and $S \eqsim s^{1/4}$.
The difference in conditions comes from the fact that for EVPs, the truncation error 
for the eigenvalue and eigenfunction are of the same order,
whereas the FE error for the eigenvalue is double the order of the eigenfunction FE error.
It is similar to how 
a higher precision numerical quadrature rule should be used to compute the elements
of the stiffness matrix for eigenvalue approximation, see, e.g., \cite{Banerjee92}.

\section{MLQMC algorithms for random eigenvalue problems}
\label{sec:mlqmc-alg}

In this section, we present two MLQMC algorithms for approximating
the expectation of a random eigenvalue. First, we briefly give a 
straightforward MLQMC algorithm,
for which a rigorous theoretical analysis of the error was presented in \cite{GS20a}.
After analysing the cost of this algorithm we then present a second, 
more efficient MLQMC algorithm,
where we focus on reducing the overall cost by reducing the cost of
evaluating each sample.

\subsection{A basic MLQMC algorithm for eigenvalue problems}

The starting point of our basic MLQMC algorithm is the telescoping sum \eqref{eq:tele_sum},
along with a collection of 
$L + 1$ FE meshes corresponding to meshwidths, $h_0 > h_1 > \cdots > h_L > 0$,
and $L + 1$ truncation dimensions, 
$0 < s_0 \leq s_1 \leq \cdots \leq s_L < \infty$.
Recall that we denote the eigenvalue approximation on level $\ell$ by 
$\lambda_\ell \coloneqq \lambda_{h_\ell, s_\ell}$ with $\lambda_{-1} \equiv 0$.
The expectation on each level $\ell$ in the sum \eqref{eq:tele_sum} can be approximated
by a QMC rule using $N_\ell$ points, which we denote by
$Q_\ell \coloneqq Q_{s_\ell, N_\ell}$ as in \eqref{eq:rqmc}, so that our MLQMC approximation
of $\bbE[\lambda]$ is
\begin{equation}
\label{eq:mlqmc0}
\Qml_L(\bsDelta) \lambda
\,\coloneqq\, 
\sum_{\ell = 0}^L Q_\ell(\bsDelta_\ell)\big(\lambda_\ell - \lambda_{\ell - 1}\big).
\end{equation}
Here, each $\bsDelta_\ell \in [0, 1)^{s_\ell}$ is an independent random shift, so that
the QMC approximations on different levels are independent. To simplify the notation,
we also concatenate the $L+1$ shifts into a single random shift 
$\bsDelta = (\bsDelta_0; \bsDelta_1; \ldots; \bsDelta_L)$ 
(where ``;''  denotes concatenation of column vectors).
For a linear functional $\calG \in V^*$, the MLQMC approximation to
$\bbE_\bsy[\calG(u)]$ can be defined analogously.

As described in Section~\ref{sec:qmc}, in practice it is beneficial to
use $R$ independent random shifts $\bsDelta^{(1)},$ $\bsDelta^{(2)},
\ldots, \bsDelta^{(R)}$. Then the shift-averaged MLQMC approximation is
\begin{equation}
\label{eq:mlqmc_R}
\Qhatml_{L, R}\lambda \,\coloneqq\, \sum_{\ell = 0}^L \frac{1}{R} \sum_{r = 1}^R
Q_\ell(\bsDelta_\ell^{(r)})\big(\lambda_\ell - \lambda_{\ell - 1}\big)\,.
\end{equation}
In this case, the variance on each level can be estimated by 
the sample variance as given in \eqref{eq:sample_var} and denoted by
$V_\ell$. Due to the independence of the QMC approximations 
across the levels, the total variance of the MLQMC estimator is
\begin{equation}
\label{eq:ml-var}
\Vhat\big[\Qhatml_{L, R}\lambda\big]\,=\, \sum_{\ell = 0}^L V_\ell.
\end{equation}

\subsection{Cost \& error analysis}
\label{sec:cost-err}
The cost of the MLQMC estimator \eqref{eq:mlqmc_R} for the expected value of $\lambda$
is given by
\[
\cost(\Qhatml_{L, R}\lambda) \,=\, R \sum_{\ell = 0}^L N_\ell \; \cost(\lambda_\ell - \lambda_{\ell - 1})\,,
\]
where $\cost(\lambda_\ell - \lambda_{\ell - 1})$ denotes the cost of evaluating the difference
at a single parameter value. Since $\cost(\lambda_\ell - \lambda_{\ell - 1}) \leq 2 \cost(\lambda_\ell)$, 
the cost of evaluating $\lambda_\ell$ at a single parameter value, 
\[
\cost(\Qhatml_{L, R}\lambda) \,\lesssim\, R \sum_{\ell = 0}^L N_\ell \cost(\lambda_\ell)\,.
\]

The cost of evaluating the eigenvalue approximation $\lambda_\ell$ consists of two parts:
\[
\cost(\lambda_\ell) \,=\, \Csetup_\ell + \Csolve_\ell\,,
\]
where $\Csetup_\ell$ denotes the setup cost of constructing the stiffness and mass matrices, 
and $\Csolve$ denotes the cost of solving the eigenvalue problem. Since the coefficient
$c$ is independent of $\bsy$ so too is the mass matrix, and as such we
only compute it once per level. Thus, $\Csetup_\ell$ is dominated by constructing the stiffness matrix 
for each quadrature point.

Constructing the stiffness matrix  at each parameter value involves
evaluating the coefficients, which are $s_\ell$-dimensional sums, 
at the quadrature points for each element in the mesh.
Under the assumption \eqref{eq:fe_dof} on the number of FE degrees of freedom, the
number of elements in the mesh is also $\bigO(h^{-d})$, which implies the setup cost is
\[
\Csetup_\ell \,\lesssim\, s_\ell h_\ell^{-d}\,.
\]

At each each step of an iterative eigensolver a linear system
must be solved, and this forms the dominant component of the cost for that step.
Essentially, the cost of each eigenproblem solve is of the order of a source problem solve
(on the mesh $\mathscr{T}_{h_\ell}$) multiplied by the number of iterations.
As in the case of the source problem (see e.g., \cite{KSS15,KSSSU17}), we assume that the 
linear systems occurring in each iteration of the eigensolver can
be solved in $\bigO(h^{-\gamma})$ operations, with $d < \gamma < d + 1$.
Assuming that the number of iterations required is independent of $\bsy$,
the cost of each eigensolve is then
\begin{equation}
\label{eq:solver_cost}
\Csolve \,\lesssim\, h_\ell^{-\gamma}.
\end{equation}
We discuss how to bound the number of iterations of the eigensolver in Section~\ref{sec:nearby_qmc}.

It then follows that the cost of evaluating $\lambda_\ell$ at a single
parameter value satisfies
$\cost(\lambda_\ell) \lesssim s_\ell h_\ell^{-d} + h_\ell^{-\gamma}$, and 
hence the total cost of the MLQMC estimator \eqref{eq:mlqmc_R}
satisfies
\begin{equation}
\label{eq:cost_lambda}
\cost(\Qhatml_{L, R}\lambda) \,\lesssim\, R\sum_{\ell = 0}^L N_\ell (s_\ell h_\ell^{-d} + h_\ell^{-\gamma})\,.
\end{equation}

Since the eigenfunction approximation $u_\ell$ is computed at the same time as $\lambda_\ell$,
and we assume that the cost of applying a linear functional $\calG$ is constant, the cost of the MLQMC estimator
$\Qhatml_{L, R} \calG(u)$ is of the same order as the cost of the
eigenvalue estimator in \eqref{eq:cost_lambda}.

The error of the approximation \eqref{eq:mlqmc0} is analysed rigourously in \cite{GS20a},
and so here we only give a brief summary of one of the key results.
First, suppose that Assumption~A\ref{asm:coeff} holds with $0 < p < q < 1$,
and let each $Q_\ell$ use a generating vector given by the CBC construction.
Next, choose  $h_\ell \eqsim 2^{-\ell}$ and $s_\ell = s_L \eqsim h_L^{2p/(2 - p)}$.
Then, it was shown in \cite[Corollary~3.1]{GS20a} that for $0 \leq \varepsilon < \exp(-1) $, 
we can choose $L$ and the number of points on each level,
$N_\ell$, such that the mean-square error of the estimator \eqref{eq:mlqmc0} is bounded by
\[
\bbE_\bsDelta\big[|\bbE_\bsy[\lambda] - Q^\mathrm{ML}_L(\bsDelta) \lambda |^2\big]
\,\leq\, \varepsilon^2,
\]
and for $\delta > 0$ the cost is bounded by
\begin{equation}
\label{eq:cost_bound}
\cost\big(Q^{\mathrm{ML}}_{L}(\lambda)\big) \,\lesssim\,
\begin{cases}
\varepsilon^{-2/\eta - p/(2 - p)} 
& \text{if } \eta < 4/d,\\
\varepsilon^{-2/\eta - p/(2 - p)} \log_2(\varepsilon^{-1})^{1 + 1/\eta}
&\text{if } \eta = 4/d,\\
\varepsilon^{-d/2 - p/(2 - p)} 
& \text{if } \eta > 4/d,
\end{cases}
\end{equation}
where $\eta$ is the convergence rate of the variance of
$V_\ell$ with respect to $N_\ell$, which by \cite[Theorem~5.3]{GS20a} is given by\vspace{-1ex}
\[
\qquad \eta \,=\, \begin{cases}
2 - \delta & \text{if } q \in (0, \tfrac{2}{3}]\\[1mm]
\displaystyle \frac{2}{q} - 1 & \text{if } q \in (\tfrac{2}{3}, 1).
\end{cases}
\]

In practice, we typically set $h_\ell \eqsim 2^{-\ell}$, $s_\ell \eqsim 2^\ell$ and use the adaptive algorithm 
from \cite{GilesWater09} to choose $\{N_\ell\}$ and $L$.
Although this is a greedy algorithm, it was shown in \cite[Section 3.3]{KSSSU17} 
that the resulting choice of $N_{\ell}$ leads to the same asymptotic order for the overall cost
as the choice of $N_{\ell}$ in the theoretical complexity estimate from \cite[Corollary~3.1]{GS20a}.

\subsection{An efficient MLQMC method with reduced cost per sample}
\label{sec:tg-mlqmc}

To reduce the cost of computing each sample in
the MLQMC approximation \eqref{eq:mlqmc0} in practice, we employ the following two strategies
for each evaluation of the difference $\lambda_\ell - \lambda_{\ell - 1}$
on a given level: 1) we use the two-grid-truncation method (cf.~Algorithm~\ref{alg:2grid}) to evaluate 
the eigenpairs in the difference; and 
2) we use the eigenvector from a nearby quadrature point as the starting vector for the 
eigensolve on the coarse mesh.

\paragraph{Two-grid-truncation methods}
Our strategy for how to use the two-grid-truncation method from Section~\ref{sec:2grid}
for a given sample $\bsy$ is as follows.
First, we solve the EVP \eqref{eq:2grid_source}
corresponding to a coarse discretisation, 
with meshwidth and truncation dimension given by
\[
H_\ell = \min\big(h_\ell^{1/4}, h_0\big)
\quad \text{and} \quad
S_\ell = \max\big( \big\lceil s_\ell^{1/2} \big\rceil, s_0\big),
\]
to get the \emph{coarse} eigenpair $(\lambda_{H_\ell, S_\ell}(\bsy), u_{H_\ell, S_\ell}(\bsy))$.
Then,  we let 
$u^\ell(\bsy) \coloneqq u^{h_\ell, s_\ell}(\bsy)  \in V_\ell$ 
be the solution to the following 
source problem
\begin{equation}
\label{eq:2grid_src_ell}
\calA_{s_\ell}(\bsy; u^\ell(\bsy), v) - 
\lambda_{H_\ell, s_\ell}(\bsy)\calM(u^\ell(\bsy), v)
\,=\, \calM(u_{H_\ell, s_\ell}(\bsy), v)
\Forall v \in V_\ell\,,
\end{equation}
and define the eigenvalue approximations for $\ell = 1, 2, \ldots, L$ by the Rayleigh quotient
\begin{equation}
\label{eq:2grid_update_ell}
\lambda^{\ell}(\bsy) \,\coloneqq\, \lambda^{h_\ell}_{s_\ell}(\bsy) 
\,\coloneqq\, \frac{\calA_{s_\ell}(\bsy; u^\ell(\bsy), u^\ell(\bsy))}{\calM(u^\ell(\bsy), u^\ell(\bsy))}\,.
\end{equation}

The eigenpair on level $\ell - 1$ for the same sample is computed in
the same way and we set $\lambda^{-1} (\bsy) = 0$ and $\lambda^{0}(\bsy)  =  \lambda_{h_0, s_0}(\bsy) $.

In this way, the MLQMC approximation with two-grid update, and $R$ random shifts, is given by
\begin{equation}
\label{eq:tg-mlqmc}
\Qhattg_{L, R} \lambda
\,\coloneqq\, 
\frac{1}{R} \sum_{r = 1}^R \sum_{\ell = 0}^L 
Q_\ell(\bsDelta_\ell^{(r)})\big(\lambda^\ell - \lambda^{\ell - 1}\big).
\end{equation}

Note that for a given sample on level $\ell$ we use $(\lambda_{H_\ell, S_\ell}(\bsy), u_{H_\ell, S_\ell}
(\bsy))$ to compute $\lambda^{\ell - 1}$ as well. Technically, this violates the telescoping property,
since $\lambda^{\ell - 1}$ from the previous level ($\ell - 1$) will use 
$(\lambda_{H_{\ell - 1}, S_{\ell - 1}}(\bsy), u_{H_{\ell - 1}, S_{\ell - 1}} (\bsy))$, but in practice this 
difference is negligible and does not justify an extra coarse solve.
Furthermore, since the two-grid method allows for such a large difference in parameters of the
coarse grid and the fine grid ($H \eqsim h^{1/4}$ and $S \eqsim s^{1/2}$), 
often we will have the case where 
$H_{\ell - 1} = H_\ell = h_0$ and $S_{\ell - 1} = S_\ell = s_0$. So that 
reusing $(\lambda_{H_\ell, S_\ell}(\bsy), u_{H_\ell, S_\ell}(\bsy))$ to compute
$\lambda^{\ell - 1}$ in the difference on level $\ell$ does not violate the telescoping property.
As an example, if we take $h_0 = 1/8$, then we can use $H_\ell = h_0$ as the coarse
meshwidth for all levels $\ell$ up to $h_\ell \leq 2^{-12} = 1/4096$.
In all of our numerical results, $1/4096$ was below the finest grid size $h_L$ required.

Using the two-grid method still involves solving a source problem on
the fine mesh, so that the cost of a two-grid solve is of the same order as 
$\Csolve$ but with an improved constant. The reduction in cost is proportional to 
the number of RQ iterations that are required to solve the
eigenproblem on the fine mesh without two-grid acceleration, 
so that the highest gains will be achieved for problems where the RQ iteration converges slowly.

\paragraph{Reusing samples from nearby QMC points}

Now, for each sample we must still solve the EVP \eqref{eq:2grid_src_ell}
corresponding to a coarse mesh and a reduced truncation dimension, which 
we do using the RQ algorithm (see, e.g., \cite{Parlett80}).
To reduce the number of RQ iterations to compute this coarse eigenpair 
at some QMC point $\bst_k$, we use 
the eigenvector from a nearby QMC point (say $\bst'$) as the starting vector:
$v_0 = u_{H_\ell, S_\ell}(\bst')$. For the initial shift in the RQ algorithm we use
the Rayleigh quotient of this nearby vector with respect to the bilinear form
at the \emph{current} QMC point: $\sigma_0 = \calA_{S_\ell}(\bst_k; v_0, v_0)$.
In practice, we have found that a good choice of the nearby QMC point is simply the previous
point $\bst_{k - 1}$.

Explicit details on how these two strategies are implemented to construct 
the estimator \eqref{eq:tg-mlqmc} in practice are given in Algorithm~\ref{alg:MLQMC}.
First we introduce some notation to simplify the presentation.
Denote the $k$th randomly shifted rank-1 lattice point on level $\ell$ by
\begin{equation}
\label{eq:t_kl}
\bst_{\ell, k} \,\coloneqq\, \left\{\frac{k\bsz_\ell}{N_\ell} + \bsDelta_\ell\right\},
\end{equation}
where $\bsz_\ell$ is an $s_\ell$-dimensional generating vector and 
$\bsDelta_\ell \sim \Uni[0, 1)^{s_\ell}$.

\begin{algorithm}[t]
\caption{Two-grid MLQMC for eigenvalue problems}
\label{alg:MLQMC}
Given $v_0$, $L$, $R$, $\{s_\ell\}_{\ell = 0}^L$, $\{h_\ell\}_{\ell = 0}^L$ 
and $\{N_\ell\}_{\ell = 0}^L$:
\begin{algorithmic}[1]
\For{$\ell = 0, 1, 2, \ldots, L$}
	\State $H_\ell \leftarrow \min (h_\ell^{1/4}, h_0)$ and 
	$S_\ell \leftarrow \max (s_\ell^{1/2}, s_0)$
	\For{$r = 1, 2, \ldots, R$}
		\State generate $\bsDelta_\ell \sim \Uni[0, 1)^{s_\ell}$
	   \For{$k = 0, 1, \ldots, N_\ell$}
	      \State generate $\bst_{\ell, k}$ using the shift $\bsDelta_\ell$ as in \eqref{eq:t_kl}
	      \Comment shifted QMC point
	      \State compute $(\lambda_{H_\ell, S_\ell}(\bst_{\ell,
                k}), u_{H_\ell, S_\ell}(\bst_{\ell, k}))$ using $v_0$ as start value
  	      \State $v_0 \leftarrow u_{H_\ell, S_\ell}(\bst_{\ell, k})$
	      \Comment update starting value
	      \If{$\ell > 0$}
	      \State solve the source problem \eqref{eq:2grid_src_ell} for $u^\ell(\bst_{\ell, k}) \in V_\ell$
        \State set  $\lambda_{H_{\ell-1},S_{\ell-1}} \leftarrow  \lambda_{H_\ell,S_\ell}$
	         and $u_{H_{\ell-1},S_{\ell-1}} \leftarrow  u_{H_{\ell},S_{\ell}}$
	      \State solve the source problem \eqref{eq:2grid_src_ell} for 
	      $u^{\ell - 1}(\bst_{\ell, k}) \in V_{\ell - 1}$
	      \State $\displaystyle \lambda_\ell(\bst_{\ell, k}) \leftarrow 
	                  \frac{\calA_{s_\ell}(\bst_{\ell, k}; u^\ell(\bst_{\ell, k}), u^\ell(\bst_{\ell, k}))}
	                  {\calM(u^\ell(\bst^{\ell, k}), u^\ell(\bst_{\ell, k}))}$
			\Comment two-grid updates	               
			\State $\displaystyle \lambda_{\ell - 1}(\bst_{\ell, k}) \leftarrow 
	                  \frac{\calA_{s_{\ell - 1}}(\bst_{\ell, k}; u^{\ell - 1}(\bst_{\ell, k}), u^{\ell - 1}(\bst_{\ell, k}))}{\calM(u^{\ell - 1}(\bst_{\ell, k}), u^{\ell - 1}(\bst_{\ell, k}))}$
			\EndIf
	      \State $Q^{(r)}_\ell\lambda \leftarrow Q^{(r)}_\ell\lambda + (\lambda^\ell(\bst_{\ell, k}) - \lambda^{\ell - 1}(\bst_{\ell - 1, k}))$
	      \Comment update QMC sum
	   \EndFor
	   \State $\displaystyle Q^{(r)}_\ell\lambda \leftarrow \frac{1}{N_\ell} Q^{(r)}_{\ell} \lambda$
	   \State $\Qhat_{\ell, R}\lambda \leftarrow \Qhat_{\ell, R} + Q^{(r)}_\ell \lambda$
   \EndFor
   \State $\displaystyle\Qhat_{\ell, R}\lambda\leftarrow \frac{1}{R}\Qhat_{\ell, R}\lambda$
   \Comment average over shifts
   \State $\displaystyle\Qhatml_{R, L}\lambda \leftarrow \Qhatml_{L, R}\lambda + \Qhat_{\ell, R}\lambda$
   \Comment update ML estimator
\EndFor
\end{algorithmic}
\end{algorithm}

Finally, by relaxing the restriction that approximations on different levels
are independent from one another we can use the same set of random 
shifts for all levels. In this case, the variance decomposition \eqref{eq:ml-var}
becomes an inequality with a factor $L$ in front of the sum.
Following the arguments in \cite[Section~3.1 and Remark 2]{BierChern16}
it can be shown that this does not significantly change the overall complexity,
at worst the cost increases by a factor of $|\log(\epsilon)|$.

Then, if we also use nested QMC rules
we can reuse approximations from lower levels on the higher levels.
In particular, for $\ell \geq 1$ we will have $\bst_{\ell, k} = \bst_{0, k}$ and can set $H_\ell = h_0$ 
so that we can omit 
the coarse eigenvalue solves (steps 6 and 8) in Algorithm~\ref{alg:MLQMC}.
Furthermore, there is no need to calculate $\lambda_{\ell - 1}$, $u_{\ell - 1}$ again either,
so steps 12 and 14 can also be skipped.

In this case, because the optimal choice for the parameters in
the two-grid-truncation methods are $H \eqsim h^{1/4}$ and $S \eqsim s^{1/2}$, 
the range of possible meshwidths and truncation dimensions are restricted. 
With this in mind, we let meshwidth of the 
finest triangulation be denoted by $h$ and let the coarsest possible triangulation have 
$h_0 \leq h^{1/4}$, then we define the maximum number of levels $L$ and the meshwidth 
on each level so that
\[
h \,=\, h_L \,<\, h_{L - 1} \,<\, \cdots \,<\, h_1 \,<\, h_0 \,\leq\, h^{1/4}\,.
\]
For example, if $h = 2^{-8}$, then $h_0 = 2^{-2}$ and
we could take $ L = 6$ with $h_\ell = 2^{-\ell - 2}$. 
Again, this does not affect the asymptotic complexity bounds proved in
\cite{GS20a}.
To overcome this restriction on the coarse and fine meshwidths,
one could instead use a full multigrid method as in \cite{RobNuyVan19}.
Such an extension would be an interesting topic for future work.

Note that for a given problem, 
it may be possible that a meshwidth of $h^{1/4}$ is not sufficiently fine to resolve the coefficients.
For this reason, we only demand that $h_0 \leq h^{1/4}$ and not equality. 
Note also that, asymptotically, the coarsest meshwidth $h_0$
must decrease with the finest meshwidth $h_L$, but only at the rate $h_0 \eqsim h_L^{1/4}$.
Similarly, defining $s_L$ to be the highest truncation dimension, 
the lowest truncation dimension increases like $s_0 \eqsim s_L^{1/2}$.

\subsection{Analysis of using nearby QMC samples}
\label{sec:nearby_qmc}

The argument for why starting from the eigenvector of a nearby QMC point
reduces the number of RQ iterations is very intuitive:
As the number of points $N$ in a QMC rule increases the points necessarily become
closer, and since the eigenvectors are Lipschitz in the parameter 
(see \cite[Proposition~2.3]{GGKSS19}) this implies that the eigenvectors
corresponding to nearby QMC samples become closer as $N$ increases.
Hence the starting guess for the RQ algorithm becomes
closer to the eigenvector that is to be found, and so for a fixed
tolerance the number of RQ iterations also decreases.

In this section we provide some basic analysis to justify our intuition above. 
Throughout it will be convenient to use the more geometric notions from the
classical discrepancy theory of QMC point sets on the unit cube $[0, 1]^s$, 
which were discussed in Section~\ref{sec:disc}.
From the definition of the star discrepancy (see Definition~\ref{def:D*}) 
follows a simple upper bound on how
close nearby points are in a low-discrepancy point set.
The result is given in terms of $\dist(\cdot, \cdot)$, the distance function with respect
to the $\ell^\infty$ norm.

\begin{proposition}
\label{prop:dist_P_N}
Let $\calP_N$ be a low-discrepancy point set for $N > 1$, then
\begin{equation}
\label{eq:dist_P_N}
\max_{\bst \in \calP_N} \dist(\bst, \calP_N \setminus \{\bst\}) \,\leq\,  
3C_{\calP_N}^{1/s} \log (N)^{1 - 1/s} N^{-1/s},
\end{equation}
where $C_{\calP_N}$ is the constant from the discrepancy bound \eqref{eq:low-D_N}
on $\calP_N$.
\end{proposition}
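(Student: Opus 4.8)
The plan is to bound, for an arbitrary point $\bst \in \calP_N$, the distance to its nearest neighbour by exhibiting a small axis-parallel box around $\bst$ which, by the discrepancy bound, must contain another point of $\calP_N$. Concretely, suppose for contradiction that there is a point $\bst \in \calP_N$ with $\dist(\bst, \calP_N \setminus \{\bst\}) > r$ for some radius $r > 0$ to be chosen. Then the open $\ell^\infty$-ball of radius $r$ about $\bst$ contains no point of $\calP_N$ other than $\bst$ itself. I would intersect this ball with $[0,1]^s$ to obtain an axis-parallel box $B$ of side length at least $r$ in every coordinate (shrinking to a corner box of side $r$ if $\bst$ is near the boundary, which only helps since we then still get a genuine box anchored appropriately); $B$ contains exactly one point of $\calP_N$. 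To use the star-discrepancy bound \eqref{eq:low-D_N}, which controls only boxes anchored at the origin $[\bszero, \bsb)$, I would express the count $|\calP_N \cap B|$ as an inclusion--exclusion alternating sum over the $2^s$ anchored boxes $[\bszero, \bsc)$ whose corners $\bsc$ range over the vertices of $B$; applying $D_N^*$ to each term gives
\[
\Big| \tfrac{1}{N}|\calP_N \cap B| - \calL_s(B) \Big| \,\leq\, 2^s D_N^*(\calP_N) \,\leq\, 2^s C_{\calP_N} \frac{\log(N)^{s-1}}{N}.
\]
Since $|\calP_N \cap B| = 1$ and $\calL_s(B) \geq r^s$ (a box of side $\geq r$), rearranging yields $r^s \leq \tfrac{1}{N} + 2^s C_{\calP_N} \log(N)^{s-1}/N$.

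From here it is a matter of bookkeeping to extract the stated bound. For $N > 1$ the dominant term on the right is $2^s C_{\calP_N}\log(N)^{s-1}/N$ (the $1/N$ term is absorbed, using $\log(N)^{s-1} \geq$ a constant and $C_{\calP_N}, 2^s$ absorbing it; one may need $N$ large enough or simply fold it into a slightly larger constant), so $r^s \lesssim 2^s C_{\calP_N}\log(N)^{s-1}/N$, hence $r \lesssim 2\, C_{\calP_N}^{1/s} \log(N)^{(s-1)/s} N^{-1/s} = 2\, C_{\calP_N}^{1/s}\log(N)^{1-1/s} N^{-1/s}$. Taking $r$ up to the supremal value for which no contradiction arises gives exactly that $\max_{\bst} \dist(\bst, \calP_N \setminus \{\bst\})$ is bounded by this quantity, and the constant $3$ in \eqref{eq:dist_P_N} comfortably covers the factor $2$ from $2^s$ together with the absorbed $1/N$ term and any boundary-case loss.

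The main obstacle — really the only non-routine point — is handling boxes not anchored at the origin, i.e., converting the star discrepancy (anchored boxes) into a bound on the discrepancy of a general box around $\bst$; this is where the $2^s$ inclusion--exclusion factor enters and why the final constant is raised to the power $1/s$ (so that $2$ comes out rather than $2^s$). An alternative, cleaner route would be to invoke the extreme discrepancy $\widehat{D}_N$ of Definition~\ref{def:D_ext} directly, using the standard fact that $\widehat{D}_N(\calP_N) \leq 2^s D_N^*(\calP_N)$, which packages exactly this step; then $|\frac{1}{N}|\calP_N \cap B| - \calL_s(B)| \leq \widehat{D}_N(\calP_N)$ immediately. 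A second minor point is the boundary case where $\bst$ lies within $r$ of $\partial[0,1]^s$: there the ball is clipped, but one still obtains an axis-parallel box of side length $\geq r$ in each coordinate containing only $\bst$, so the argument goes through unchanged. I do not expect either point to cause real difficulty; the proof is essentially a one-paragraph pigeonhole argument once the discrepancy-to-box-count translation is in hand.
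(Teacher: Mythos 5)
Your proposal is correct and follows essentially the same route as the paper's proof: both pass from the star discrepancy to the discrepancy of a general (non-anchored) box via the factor $2^s$ (the extreme discrepancy bound $\widehat{D}_N \leq 2^s D_N^*$), apply it to a small axis-parallel box around $\bst$ (shifted to stay inside $[0,1]^s$ near the boundary) to force a second point of $\calP_N$ into that box, and absorb the $O(1/N)$ term and the $2^s$ into the constant $3^s$ using $2+2^s\le 3^s$ and $C_{\calP_N}\log(N)^{s-1}\ge 1$. Your contradiction/pigeonhole phrasing versus the paper's direct choice of box side $\tau=(2/N+\widehat{D}_N(\calP_N))^{1/s}$ is only a cosmetic difference.
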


\begin{proof}
Let $\bst \in \calP_N$.
Clearly $\dist(\bst, \calP_N \setminus \{\bst\}) \,\leq\, 1$ holds
trivially because
$\sup_{\bsx, \bsy \in [0, 1]^s} \nrm{\bsx - \bsy}{\ell^\infty} \leq1$.
Hence, we can assume, without loss of generality, that the upper bound in 
\eqref{eq:dist_P_N} satisfies
\begin{equation}
\label{eq:asm_bnd<1}
3C_{\calP_N}^{1/s} \log (N)^{1 - 1/s} N^{-1/s} \,<\, 1,
\end{equation}
which will be satisfied for $N$ sufficiently large.

For any box 
$[\bsa, \bsb) \subset [0, 1]^s$, it follows from the definition of the 
\emph{extreme discrepancy} $\widehat{D}_N$ in
Definition~\ref{def:D_ext} that
\[
\Bigg| \frac{|\{\calP_N \cap [\bsa, \bsb)\}|}{N} - \calL_s\big([\bsa, \bsb)\big) \Bigg|
\,\leq\, 
\sup_{\bsa \le \bsb \in [0, 1]^s}
\bigg| \frac{|\{\calP_N \cap [\bsa, \bsb)\}|}{N} - \calL_s\big([\bsa, \bsb)\big)\bigg|
\,\eqqcolon\, \widehat{D}_N(\calP_N).
\]
By the reverse triangle inequality it then follows that
\begin{equation}
\label{eq:P_N_lower}
|\{\calP_N \cap [\bsa, \bsb)\}| 
\,\geq\, 
N\big(\calL_s\big([\bsa, \bsb)\big) - \widehat{D}_N(\calP_N) \big).
\end{equation}

Now, define $\tau = \big(2/N + \widehat{D}_N(\calP_N)\big)^{1/s}$ and 
consider the box $[\bsa, \bsb)$ given by
\begin{equation}
\label{eq:def_box}
[a_j, b_j) \,=\,
\begin{cases}
[t_j, t_j + \tau) & \text{if } t_j + \tau < 1,\\
[1 - \tau, 1) & \text{otherwise.}
\end{cases}
\end{equation}
From \cite[Proposition 3.14]{DP10},
the extreme discrepancy can be bounded by the star discrepancy:
$\widehat{D}_N(\calP_N) \leq 2^s D_N^*(\calP_N)$, and then due to 
\eqref{eq:low-D_N} and \eqref{eq:asm_bnd<1} we have the upper bound
\begin{equation}
\label{eq:c_upper}
\tau \,\leq\, 
\bigg(\frac{2}{N} + 2^sC_{\calP_N} \frac{\log (N)^{s - 1}}{N}\bigg)^{1/s}
\,\leq\,
3C_{\calP_N}^{1/s} \log (N)^{1 - 1/s} N^{-1/s}
\,<\, 1,
\end{equation}
where we have also used the fact that 
$C_{\calP_N}\log(N)^{s - 1} > 1$ for $N$ sufficiently large and $2 + 2^s \leq 3^s$.
As such, we have $[\bsa, \bsb) \subset [0, 1]^s$, with $\bst \in [\bsa, \bsb)$,
and $\calL_s([\bsa, \bsb)) = \tau^s < 1$.

Applying the lower bound \eqref{eq:P_N_lower} to the box $[\bsa,
\bsb)$ defined in \eqref{eq:def_box} gives
\[
|\{\calP_N \cap [\bsa, \bsb)\}| 
\,\geq\, 
N\big(\tau^s - \widehat{D}_N(\calP_N) \big)
\,=\, N\big(2/N + \widehat{D}_N(\calP_N) - \widehat{D}_N(\calP_N) \big)
\,=\, 2,
\]
which implies that there are at least 2 points in the box $[\bsa, \bsb)$.
By the construction of the box $[\bsa, \bsb)$ it then follows from 
\eqref{eq:c_upper} that there exists
a $\bst' \in \calP_N$ such that $\bst' \neq \bst$ and
\[
\nrm{\bst - \bst'}{\ell^\infty} 
\,\leq\, \tau
\,\leq\, 3C_{\calP_N}^{1/s} \log (N)^{1 - 1/s} N^{-1/s}.\vspace{-2ex}
\]
\end{proof}

Since the eigenvalue and eigenfunction are analytic and thus Lipschitz in $\bsy$, 
we can now bound how close
eigenpairs corresponding to nearby QMC points are, explicit in~$N$.
\begin{proposition}
\label{prop:nearby_eigenpair}
Let $\calP_N$ be a low-discrepancy point set, let $s \in \N$, let $h > 0$ be sufficiently small 
and suppose that Assumption~A\ref{asm:coeff} holds.
Then for any $\bst \in \calP_N$ there exists $\bst \neq \bst' \in \calP_N$ such that the 
eigenvalue and eigenfunction satisfy
\begin{align}
\label{eq:lam_nearby_QMC}
|\lambda_{h, s}(\bst) - \lambda_{h,s}(\bst')| 
\,&\lesssim\, \log (N)^{1 - 1/s} N^{-1/s},
\quad \text{and }\\
\label{eq:u_nearby_QMC}
\nrm{u_{h, s}(\bst) - u_{h, s}(\bst')}{V} \,&\lesssim\, \log (N)^{1 - 1/s} N^{-1/s},
\end{align}
where the constants are independent of $\bst,\, \bst'$, $s$ and $h$.
\end{proposition}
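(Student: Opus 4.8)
The plan is to combine the geometric estimate of Proposition~\ref{prop:dist_P_N}, which says that in a low-discrepancy point set every point has a distinct neighbour within $\ell^\infty$-distance $\mathcal{O}(\log(N)^{1-1/s}N^{-1/s})$, with the Lipschitz dependence of the discrete eigenpair $(\lambda_{h,s}(\cdot), u_{h,s}(\cdot))$ on the parameter $\bsy$, where the Lipschitz constant must be shown to be independent of both $h$ and $s$.

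First, fix $\bst \in \calP_N$ and apply Proposition~\ref{prop:dist_P_N} to produce $\bst' \in \calP_N$ with $\bst' \neq \bst$ and $\nrm{\bst - \bst'}{\ell^\infty} \leq 3 C_{\calP_N}^{1/s}\log(N)^{1-1/s}N^{-1/s}$. Since $C_{\calP_N}$ in \eqref{eq:low-D_N} does not depend on $s$, we have $C_{\calP_N}^{1/s} \leq \max(1, C_{\calP_N})$, so the bound is $\lesssim \log(N)^{1-1/s}N^{-1/s}$ with an $s$-independent constant.

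Next I would establish the Lipschitz estimate. Since $\calA_s(\bsy;\cdot,\cdot)$ is affine in $\bsy$,
\[
\big|\calA_s(\bsy; w, v) - \calA_s(\bsy'; w, v)\big| \,\leq\, \Big(\sum_{j=1}^\infty \nrm{a_j}{L^\infty} + C_\mathrm{Poin}^2\sum_{j=1}^\infty \nrm{b_j}{L^\infty}\Big)\nrm{\bsy - \bsy'}{\ell^\infty}\,\nrm{w}{V}\nrm{v}{V},
\]
and the two series are finite because the $p$-summability with $p < 1$ in Assumption~A\ref{asm:coeff} forces $\ell^1$-summability of $(\nrm{a_j}{L^\infty})_j$ and $(\nrm{b_j}{L^\infty})_j$; this is exactly where $s$-independence is gained. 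Together with the uniform coercivity and continuity constants in \eqref{eq:A_equiv}, a standard argument (based on the identity $\calA_s(\bsy; (T_{h,s}(\bsy) - T_{h,s}(\bsy'))f, v_h) = (\calA_s(\bsy') - \calA_s(\bsy))(T_{h,s}(\bsy')f, v_h)$) gives $\|T_{h,s}(\bsy) - T_{h,s}(\bsy')\| \lesssim \nrm{\bsy - \bsy'}{\ell^\infty}$, uniformly in $h$ and $s$, where the relevant operator norms are interchangeable thanks to \eqref{eq:poin}. The eigenvalue bound \eqref{eq:lam_nearby_QMC} then follows by Weyl's inequality for $\mu_{h,s} = 1/\lambda_{h,s}$, the largest eigenvalue of the $\calM$-self-adjoint operator $T_{h,s}$, combined with the uniform eigenvalue bounds \eqref{eq:lam_bnd} (which convert a perturbation of $\mu_{h,s}$ into one of $\lambda_{h,s}$). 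The eigenfunction bound \eqref{eq:u_nearby_QMC} follows from a Davis--Kahan-type estimate for the associated spectral projection, using the uniform spectral gap \eqref{eq:gap_h} and the fixed sign/normalisation convention for $u_{h,s}$. Alternatively, one may invoke \cite[Proposition~2.3]{GGKSS19} and note that its proof transfers verbatim to the FE-truncated problem, with constants depending only on $\amin$, $\amax$, $C_\mathrm{Poin}$, the spectral gap and the $\ell^1$-norms of the coefficient sequences, hence on neither $h$ nor $s$.

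Finally, chaining the two estimates yields \eqref{eq:lam_nearby_QMC} and \eqref{eq:u_nearby_QMC} with constants independent of $\bst, \bst', s, h$; for the finitely many small values of $N$ not covered by the ``$N$ sufficiently large'' hypothesis of Proposition~\ref{prop:dist_P_N} the inequalities are trivial, since $\lambda_{h,s}$ and $\nrm{u_{h,s}}{V}$ are uniformly bounded by \eqref{eq:lam_bnd}--\eqref{eq:u_bnd} while the right-hand sides stay bounded below. The main obstacle is the Lipschitz step: one must extract a Lipschitz constant that is simultaneously uniform in $h$ --- which relies on the $h$-uniform spectral gap \eqref{eq:gap_h} and eigenvalue/eigenfunction bounds \eqref{eq:lam_bnd}--\eqref{eq:u_bnd} --- and uniform in $s$ --- which relies on $p < 1$ in Assumption~A\ref{asm:coeff}, ensuring $\sum_j \nrm{a_j}{L^\infty}, \sum_j \nrm{b_j}{L^\infty} < \infty$ so that the $\ell^\infty$-parameter Lipschitz constant is a convergent series rather than a partial sum growing with $s$.
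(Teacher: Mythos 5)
Your first half coincides with the paper: fix $\bst$, invoke Proposition~\ref{prop:dist_P_N} to get a distinct neighbour $\bst'$ within $\ell^\infty$-distance $3C_{\calP_N}^{1/s}\log(N)^{1-1/s}N^{-1/s}$, and absorb $C_{\calP_N}^{1/s}\leq\max(1,C_{\calP_N})$ to keep the constant $s$-independent. Where you diverge is the Lipschitz step. The paper does not use operator perturbation at all: it uses the analyticity of $u_h$ in $\bsy$, writes a zeroth-order Taylor expansion with integral remainder, and bounds $\sum_j\sup_\tau\nrm{\partial_{\bsy}^j u_h(\tau\bsy)}{V}$ by the uniform parametric-derivative bounds from the companion paper \cite[eq.~(4.4)]{GS20a} together with Assumption~A\ref{asm:coeff}.\ref{itm:summable}, which gives a Lipschitz constant uniform in $h$ and $s$ in one stroke (and the same argument, essentially your ``alternative'' of transferring \cite[Prop.~2.3]{GGKSS19}, handles the eigenvalue). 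Your route through $\|T_{h,s}(\bsy)-T_{h,s}(\bsy')\|\lesssim\nrm{\bsy-\bsy'}{\ell^\infty}$ plus Weyl's inequality for $\mu_{h,s}=1/\lambda_{h,s}$ (converted back via \eqref{eq:lam_bnd}) is a sound and more self-contained way to get \eqref{eq:lam_nearby_QMC}, since both operators are self-adjoint with respect to the $\bsy$-independent inner product $\calM$.

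The gap is in the eigenfunction step. A two-operator Davis--Kahan bound is available precisely in the inner product in which both $T_{h,s}(\bsy)$ and $T_{h,s}(\bsy')$ are self-adjoint, namely $\calM$, which is equivalent to $L^2$, not to $V$; on $V_h$ the $L^2$- and $V$-norms are only equivalent with $h$-dependent (inverse-inequality) constants, so the claimed $V$-norm bound uniform in $h$ does not follow directly from the estimate you cite. You need an extra step: either a one-operator residual bound in the $\calA_s(\bsy)$-inner product (with respect to which $T_{h,s}(\bsy)$ is also self-adjoint), applied to $v=u_{h,s}(\bsy')$ with residual $\|(T_{h,s}(\bsy)-T_{h,s}(\bsy'))u_{h,s}(\bsy')\|_{\calA_s(\bsy)}\lesssim\nrm{\bsy-\bsy'}{\ell^\infty}$ and the gap \eqref{eq:gap_h} translated to the $\mu$-scale via \eqref{eq:lam_bnd}; or an energy bootstrap, subtracting the two discrete eigenvalue equations, testing with $u_{h,s}(\bsy)-u_{h,s}(\bsy')$, and using coercivity of $\calA_s(\bsy)$ together with the already-established $\calM$-norm and eigenvalue bounds to upgrade to the $V$-norm. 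Either fix is standard, and the sign/normalisation issue you mention must also be resolved at this point, but as written the Davis--Kahan sentence does not deliver \eqref{eq:u_nearby_QMC} with an $h$-uniform constant. Your closing remark on small $N$ is fine, since for fixed $N\geq 2$ the right-hand side is bounded below uniformly in $s$ while the left-hand sides are bounded by \eqref{eq:lam_bnd}--\eqref{eq:u_bnd}.
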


\begin{proof}
We only prove the result for the eigenfunction. The eigenvalue result follows
the same argument.
For $h$ sufficiently small, the eigenfunction $u_h$ is analytic. 
In particular, $u_h$ admits a Taylor series that converges in $V$ for all $\bsy \in \Omega$.
Hence, for any $\bsy, \bsy' \in \Omega$
the zeroth order Taylor expansion of $u_h(\bsy)$ about $\bsy'$ (see \cite{Hoer03}) gives
\[
u_h(\bsy) \,=\, u_h(\bsy') + \sum_{j = 1}^\infty (y_j - y_j') 
\int_0^1 \pdy^j u_h(\tau\bsy) \rd \tau.
\]
Rearranging and taking the $V$-norm, this can be bounded by
\begin{align*}
\nrm{u_h(\bsy) - u_h(\bsy')}{V} \,&\leq\, \nrm{\bsy - \bsy'}{\ell^\infty} \sum_{j = 1}^\infty 
\sup_{\tau \in [0, 1]} \nrm{\pdy^j u_h(\tau\bsy)}{V}
\\
&\leq\, \nrm{\bsy - \bsy'}{\ell^\infty} \sum_{j = 1}^\infty \ubar C_\bsbeta
\max \big(\|a_j\|_{L^\infty}, \|b_j\|_{L^\infty}\big),
\end{align*}
where in the last inequality we have used the upper bound \cite[eq.~(4.4)]{GS20a} on 
the stochastic derivatives of $u_h$, and $C_\bsbeta$ is independent of $h$ and $\bsy$.
From Assumption~A\ref{asm:coeff}.\ref{itm:summable} the sum is finite, and hence
$u_h$ is globally Lipschitz in $\bsy$ with a constant that is independent of $h$. 
Since this bound holds for all $\bsy$, it also holds for all $\bsy$
with $y_j =0$ for $j > s$, and thus clearly $u_{h, s}$ is also
Lipschitz with a constant that is independent of $s$ and $h$.

The Lipschitz continuity of $u_{h, s}$ together with Proposition
\ref{prop:dist_P_N} then imply \eqref{eq:u_nearby_QMC}. Since $C_{\calP_N}^{1/s} \leq \max
( 1, C_{\calP_N})$, the result holds with a constant independent of $s$.
\end{proof}

Suppose now that for $\bst \in \calP_N$ we wish to compute the eigenpair 
$(\lambda_{h, s}(\bst), u_{h, s}(\bst))$
using the RQ algorithm with the initial vector $v_0 = u_{h, s}(\bst')$ and
initial shift $\sigma_0 = \calA_s(\bst; v_0, v_0)$, where $\bst' \in \calP_N$
is the nearby QMC point from Proposition~\ref{prop:nearby_eigenpair}.
Then, there exists an $N$ sufficiently large, such that 
these starting values satisfy
\begin{align*}
\nrm{u_{h, s}(\bst) - v_0}{V} \,<\, 1, 
\qquad \text{and} \qquad
|\lambda_{2, h, s}(\bst) - \sigma_0| \,&>\, \frac{\rho}{2},
\end{align*}
i.e., distance between the initial vector and the eigenvector to be found is less than one, and 
 the initial shift is closer to $\lambda_{h, s}(\bst)$ than to $\lambda_{2, h, s}(\bst)$.
In particular, for any $\bst \in \calP_N$ we can choose the starting values such that this holds.

Since the RQ algorithm converges cubically (see, e.g., \cite{Parlett80})
for all sufficiently close starting vectors,
for a fixed tolerance $\varepsilon > 0$ it follows that the number of 
iterations will be bounded independently of the current QMC point $\bst$,
if the starting vector is sufficiently close.
For $N$ sufficiently large,
Proposition~\ref{prop:dist_P_N} implies that for each QMC point
there is a starting vector
(taken to be the eigenvector corresponding to a nearby QMC point)
that is sufficiently close to the target eigenvector,
with a uniform upper bound on the distance \eqref{eq:dist_P_N}.
This uniform upper bound
implies that for all QMC points the target eigenvector
and the starting vector will be sufficiently close, and 
hence that the number of RQ iterations is bounded independently of the QMC point.
Furthermore, as $N$ increases the starting vector 
becomes closer to the eigenvector to be found due to
\eqref{eq:u_nearby_QMC}, and so the number of iterations
decreases with increasing $N$.

\section{Numerical results}\label{sec:num}

In this section we present numerical results for two different test problems,
which demonstrate the efficiency of MLQMC and also show the computational gains
achieved by our efficient MLQMC algorithm using two-grid methods
and nearby QMC points as described in Section~\ref{sec:tg-mlqmc}.
The superiority of MLQMC for the two test problems is also clearly 
demonstrated by a comparison with single level Monte Carlo (MC), multilevel Monte Carlo (MLMC)
and single level QMC.
All tests were performed on a single node of the computational cluster Katana
at UNSW Sydney. Note also that we use ``e'' notation for powers of 10, e.g., 5e$-3 = 5 \times 10^{-3}$.

The number of quadrature points for all methods ($N$ or $N_\ell$), 
including the MC/MLMC tests,
are chosen to be powers of 2, and for the QMC methods we use a randomly
shifted embedded lattice rule \cite{CKN06} in base 2
given by the generating vector \texttt{lattice-39102-1024-} \texttt{1048576.3600} from 
\cite{KuoLattice} with $R = 8$ random shifts. 
For base-2 embedded lattice rules, the points are enumerated in blocks of powers of
2, where each subsequent block fills in the gaps between the previous points 
and retains a lattice structure, see \cite{CKN06} for further details.
The FE triangulations are uniform, with geometrically decreasing meshwidths
given by $h_\ell = 2^{-(\ell + 3)}$, $\ell \ge 0$. For the two-grid method,
we take as the coarse meshwidth $H_\ell = h_0 = 2^{-3} = 1/8$, which satisfies
$H_\ell \le h_\ell^{1/4}$ for all $\ell \leq 10$. 
Note that none of our tests required a FE mesh as fine as $h = h_{10} = 1/4096$.
To choose $N_\ell$ and $L$, we use the adaptive MLQMC algorithm from
\cite{GilesWater09}, with error tolerances ranging from $\varepsilon  = 0.625, \ldots, 6.1$e$-5$.
For the eigensolver we use the RQ algorithm with an absolute error tolerance of 5e$-8$,
which is below the smallest error tolerance $\varepsilon$ given as input to our 
MLQMC algorithm.

Numerical tests in \cite{GGKSS19} for almost the same EVPs,
show that the error corresponding to dimension truncation with $s =
64$ is less than 1e$-5$. The smallest error tolerance we use below is
bigger than 5e$-5$. Thus, for simplicity we take a single truncation 
dimension $s_\ell = s = 64$ for all $\ell$ below. Consequently, the 
``coarse'' truncation dimension for the two-grid method is then 
$S_\ell = S = s^{1/2} = 8$ for all $\ell$.

\subsection{Problem 1}

First let $D = (0, 1)^2$ and consider the eigenvalue problem
\eqref{eq:evp} with $b \equiv 0$, $c \equiv 1$ and $a$ as in
\eqref{eq:coeff} with $a_0 =1$ or $\pi/\sqrt{2}$
and 
\begin{equation}
a_j(\bsx) \,=\, \frac{1}{j^{\widetilde{p}}} \sin (j \pi x_1) \sin((j +
1)\pi x_2),
\end{equation}
for several different values of the decay parameter $\widetilde{p} > 1$.

Taking the $L^\infty(D)$ norm of the basis functions we get 
$\|a_j\|_{L^\infty} = j^{-\widetilde{p}}$, so that
for all $\widetilde{p}$ the bounds on the coefficient are given by
\[
\amin \,=\, a_0 - \frac{\zeta(\widetilde{p})}{2}
\quad \text{and} \quad
\amax \,=\, a_0 + \frac{\zeta(\widetilde{p})}{2},
\]
where $\zeta$ is the Riemann Zeta function and thus, for
$\widetilde{p} < 2$, a choice of $a_0 = \pi/\sqrt{2}$ ensures 
$\amin > 0$. For $\widetilde{p} \ge 2$ we choose $a_0 = 1$.
Furthermore
\begin{equation}
\nabla a_j(\bsx) \,=\, 
\begin{pmatrix}
\frac{j\pi}{j^{\widetilde{p}}} \cos(j\pi x_1) \sin((j + 1)\pi x_2)
\\[3mm]
\frac{(j + 1)\pi}{j^{\widetilde{p}}} \sin(j\pi x_1) \cos((j + 1)\pi x_2)
\end{pmatrix},
\end{equation}
so that $\|a_j\|_{W^{1, \infty}} = (j + 1)\pi/j^{\widetilde{p}} \leq 2\pi j^{-(\widetilde{p} - 1)}$.
Thus, Assumption~A\ref{asm:coeff} holds for $p > 1/\widetilde{p}$ and 
$q > 1/(\widetilde{p} - 1)$. 

\begin{figure}[!t]
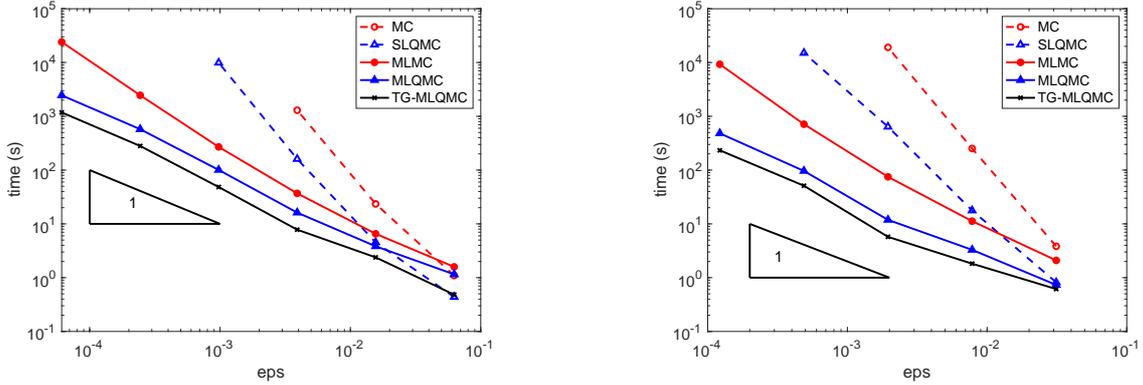

\includegraphics[scale=0.36]{./mlevp2D-q=1_katana.eps}
\hfill
\includegraphics[scale=0.36]{./mlevp2D-q=2_katana.eps}
%
\vspace{-3mm}
\caption{Problem 1: Complexity (measured as time in seconds)
of MC, QMC, MLMC, plain-vanilla MLQMC and the enhanced MLQMC method
using the two-grid method and nearby QMC points
for $\widetilde{p} = 4/3$ (left) and $\widetilde{p} = 2$ (right).
\label{fig:cost1}}
\end{figure}

In Figure~\ref{fig:cost1}, we plot the cost, measured as computational time in seconds,
against the tolerance $\varepsilon$ for our two MLQMC algorithms,
benchmarked against single level MC and QMC, and against MLMC.
To ensure an identical bias error for the single- and multilevel methods, the FE
meshwidth for the single-level methods is taken to be equal to $h_L$, the 
meshwidth on the finest level of the multilevel methods.
The decreasing sequence of tolerances $\varepsilon$ corresponds to a reduction 
in the finest meshwidth $h_L$ by a factor 2 at each step.
The axes are in log-log scale, and as a guide the black triangle in
the bottom left corner of each plot indicates a slope of $-1$. 
As expected, the MLQMC algorithms are clearly superior in all cases,
and for $\widetilde{p} = 2$, the MLMC and single level QMC methods seem to 
converge at the same rate of approximately $-2$.
Also, as we expect the cost of the two MLQMC algorithms grow at the same rate 
of roughly $-1$,
but the enhancements introduced in Algorithm~\ref{alg:MLQMC} yields a
reduction in cost by a (roughly) constant factor of about 2.
Note that for this problem the RQ algorithm requires only 3 iterations 
for almost all cases tested, and so at best we can expect a speedup factor of 3.
In almost all of our numerical tests using the eigenvector of a nearby QMC point as 
the starting vector reduced the number of RQ iterations to 2.
A similar speedup by a factor of 2 was also observed in \cite{RobNuyVan19},
which recycled samples from the multigrid hierarchy within a MLQMC algorithm for
the elliptic  source problem.

From \cite[Corollary~3.1]{GS20a},  for our MLQMC algorithms 
we expect a rate of $-1$ (with a log factor)  when 
$q \leq 2/3$, or equivalently $\widetilde{p} \geq 5/2$.
However, we observe for our MLQMC algorithms are close to $-1$, regardless
of the decay $\widetilde{p}$. A possible explanation of this is that we use an off-the-shelf 
lattice rule that hasn't been tailored to this problem, and so we observe nearly the optimal rate
but the constant may still depend on the dimension (which is fixed for these experiments).
For the other methods we observe the expected rates, with the exception of QMC,
which appears to not yet be in the asymptotic regime. Results for $\widetilde{p} = 3$
are very similar to those for $\widetilde{p} = 2$, and so have been omitted.

\subsection{Problem 2: Domain with interior islands}
Consider again the domain $D = (0, 1)^2$, and the subdomain consisting of four \emph{islands}
given by
$
\Df \coloneqq [\tfrac{1}{8}, \tfrac{3}{8}]^2 
\cup [\tfrac{5}{8}, \tfrac{7}{8}]^2
\cup [\tfrac{1}{8}, \tfrac{3}{8}] \times [\tfrac{5}{8}, \tfrac{7}{8}]
\cup [\tfrac{5}{8}, \tfrac{7}{8}] \times [\tfrac{1}{8}, \tfrac{3}{8}],$
see Figure~\ref{fig:islands} for a depiction.
Since we use uniform triangular  FE meshes with $h_\ell = 2^{-\ell + 3}$
the FE triangulation aligns with the boundaries of the components $\Df$
on all levels $\ell = 0, 1, 2, \ldots$. 
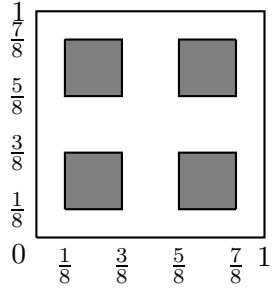
\begin{figure}[!h]
\centering
\begin{tikzpicture}[scale=3]
        \draw [thick] (0, 0) -- (1, 0) -- (1, 1) -- (0, 1) -- (0, 0);
        \fill[gray] (0.125, 0.125) rectangle (0.375, 0.375);
        \fill[gray] (0.625, 0.125) rectangle (0.875, 0.375);
        \fill[gray] (0.125, 0.625) rectangle (0.375, 0.875);
        \fill[gray] (0.625, 0.625) rectangle (0.875, 0.875);
        \draw [thick] (0.125, 0.125) -- (0.375, 0.125) -- (0.375, 0.375) -- (0.125, 0.375) -- (0.125, 0.125);
        \draw [thick] (1-0.125, 0.125) -- (1-0.375, 0.125) -- (1-0.375, 0.375) -- (1-0.125, 0.375) -- (1-0.125, 0.125);
        \draw [thick] (0.125, 1 - 0.125) -- (0.375, 1 - 0.125) -- (0.375, 1 - 0.375) -- (0.125, 1 - 0.375) -- (0.125, 1 - 0.125);
              \draw [thick] (1-0.125, 1 - 0.125) -- (1-0.375, 1 - 0.125) -- (1-0.375, 1 - 0.375) -- (1-0.125, 1 - 0.375) -- (1-0.125, 1 - 0.125);
        \draw (0, 1) node [left] {1};
        \draw(1, 0) node [below] {1};
        \draw(0, -0.07)  node [left] {0};
        \draw(0.125, 0) node [below] {$\tfrac{1}{8}$};
        \draw(0, 0.125) node [left] {$\tfrac{1}{8}$};
        \draw(0.375, 0) node [below] {$\tfrac{3}{8}$};
        \draw(0, 0.375) node [left] {$\tfrac{3}{8}$};
        \draw(1 - 0.125, 0) node [below] {$\tfrac{7}{8}$};
        \draw(0, 1 - 0.125) node [left] {$\tfrac{7}{8}$};
        \draw(1 - 0.375, 0) node [below] {$\tfrac{5}{8}$};
        \draw(0, 1 - 0.375) node [left] {$\tfrac{5}{8}$};        
        
\end{tikzpicture}
\caption{Domain $D$ with four islands forming $\Df$ (in grey).}
\label{fig:islands}
\end{figure}

The coefficients are now given by
\begin{alignat*}{2}
a_0(\bsx) &=
\begin{cases}
\sigdiff \coloneqq  0.01 & \text{if } \bsx \in \Df,\\[1mm]
\sigdiff' \coloneqq 0.011 &  \text{if } \bsx \in D \setminus\Df,
\end{cases}
\quad
&&a_j(\bsx) = 
\begin{cases}
\sigdiff w_{(j + 1)/2}(\widetilde{p}_a; \bsx) & \text{for $j$ odd},\\[1mm]
\sigdiff' w'_{j/2}(\widetilde{p}_a'; \bsx) & \text{for $j$ even},\\
\end{cases}
\\[1mm]
b_0(\bsx) &=\
\begin{cases}
\sigabs \coloneqq  2 & \text{if } \bsx \in \Df,\\[1mm]
\sigabs' \coloneqq 0.3 &  \text{if } \bsx \in D \setminus\Df,
\end{cases}
\quad
&&b_j(\bsx) =
\begin{cases}
\sigabs w_{(j + 1)/2}(\widetilde{p}_b; \bsx) & \text{for $j$ odd},\\[1mm]
\sigabs' w'_{j/2}(\widetilde{p}_b'; \bsx) & \text{for $j$ even},\\
\end{cases}
\end{alignat*}
where
\begin{align*}
w_{k}(q; \bsx) \,&=\, 
\begin{cases}
\frac{1}{k^{q}} \sin \big(8k \pi x_1\big) \sin\big(8(k + 1)\pi x_2\big)
& \text{for } \bsx \in \Df\,,\\
0 & \text{for } \bsx \in D \setminus \Df\,,\text{ and}
\end{cases}\\
w_{k}'(q; \bsx) \,&=\, \begin{cases}
0
& \text{for } \bsx \in \Df\,,\\
\frac{1}{k^{q}} \sin \big(8k\pi x_1\big) \sin\big(8(k + 1)\pi x_2\big) 
& \text{for } \bsx \in D \setminus \Df\,,
\end{cases}
\end{align*}
and where the parameters $\widetilde{p}_a, \widetilde{p}_a', \widetilde{p}_b, \widetilde{p}_b' \geq 4/3$ 
give the different decays of the coefficients on the different areas of the domain.
As for Problem 1, if any of $\widetilde{p}_a, \widetilde{p}_a', \widetilde{p}_b, \widetilde{p}_b'$
are less than 2, then we scale the corresponding zeroth term in the coefficient by $\pi/\sqrt{2}$.

The complexity of MLMC, MLQMC and the enhanced MLQMC using two-gird methods and nearby QMC points
for this problem is given in Figure~\ref{fig:cost2}. 
As expected for both MLQMC algorithms we observe a convergence rate of $-1$, and the MLMC results
approach the expected convergence rate of $-2$.
Also, since this problem is more difficult for eigensolvers to handle, 
we now observe that the two-grid MLQMC gives a speedup by a factor of more than 3.
Other tests using different values of $\widetilde{p}_a, \widetilde{p}_a', \widetilde{p}_b, \widetilde{p}_b'$
yielded similar results. Note also that numerical results for single level QMC methods applied 
to this problem (with slightly different $a_j$, $b_j$)
were given previously in \cite{GGKSS19}.
\begin{figure}[!t]
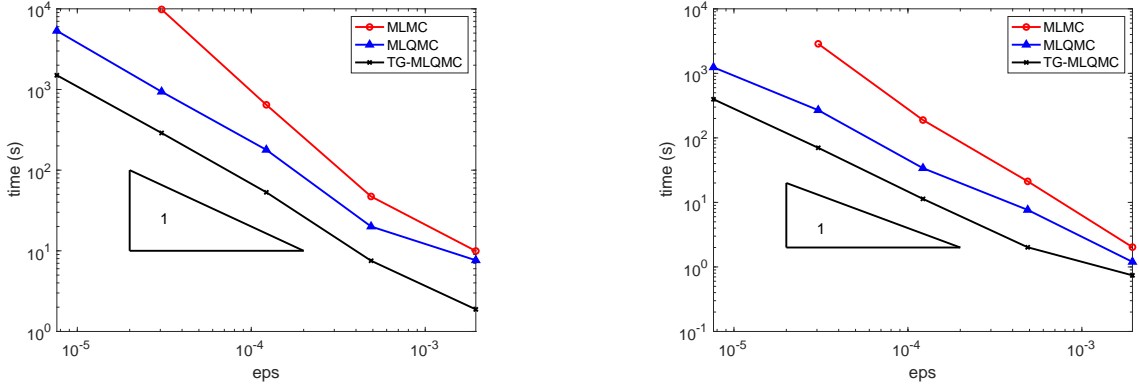

\includegraphics[scale=0.36]{./mlevp2D-island-C.eps}
\hfill
\includegraphics[scale=0.36]{./mlevp2D-island-B.eps}
\vspace{-3mm}
\caption{Problem 2: Complexity (measured as time in seconds)
of MLMC, plain-vanilla MLQMC and the enhanced MLQMC method
using the two-grid method and nearby QMC points
for 
$\widetilde{p}_a = \widetilde{p}_b = 4/3$, $\widetilde{p}_a' =\widetilde{p}_b' = 2$ (left), and
$\widetilde{p}_a = \widetilde{p}_a' = \widetilde{p}_b = \widetilde{p}_b' = 2$ (right). 
\label{fig:cost2}}
\end{figure}

\section{Conclusion}
We have developed an efficient MLQMC algorithm for random elliptic EVPs, which uses two-grid
methods and nearby QMC points to obtain a speedup compared to an ordinary MLQMC implementation.
We provided theoretical justification for the use of both strategies.
Finally, we presented numerical results for two test problems, which validate the theoretical results 
from the accompanying paper \cite{GS20a} and also demonstrate the speedup obtained by our new MLQMC 
algorithm.

\medskip
\noindent\textbf{Acknowledgements.} 
This work is supported by the Deutsche Forschungsgemeinschaft (German Research Foundation) 
under Germany’s Excellence Strategy EXC 2181/1 - 390900948 (the Heidelberg STRUCTURES 
Excellence Cluster). Also, this research includes computations using the computational 
cluster Katana supported by Research Technology Services at UNSW Sydney.

\small

\bibliographystyle{plain}
\bibliography{mlqmc-evp}

\begin{thebibliography}{10}

\bibitem{AS12}
R.~Andreev and Ch. Schwab.
\newblock {Sparse tensor approximation of parametric eigenvalue problems}.
\newblock In {I.~G.~Graham et~al.}, editor, {\em {Numerical Analysis of
  Multiscale Problems, Lecture Notes in Computational Science and
  Engineering}}, pages 203--241. Springer, Berlin, 2012.

\bibitem{AI10}
M.~N. Avramova and K.~N. Ivanov.
\newblock Verification, validation and uncertainty quantification in
  multi-physics modeling for nuclear reactor design and safety analysis.
\newblock {\em Prog. Nucl. Energy}, 52:601–--614, 2010.

\bibitem{AEHW12}
D.~A.~F. Ayres, M.~D. Eaton, A.~W. Hagues, and M.~M.~R. Williams.
\newblock Uncertainty quantification in neutron transport with generalized
  polynomial chaos using the method of characteristics.
\newblock {\em Ann. Nucl. Energy}, 45:14--–28, 2012.

\bibitem{BO91}
I.~Babu\v{s}ka and J.~Osborn.
\newblock Eigenvalue problems.
\newblock In {P.~G.~Ciarlet and J.~L.~Lions}, editor, {\em Handbook of
  Numerical Analysis, Volume 2: Finite Element Methods (Part 1)}, pages
  641--787. Elsevier, Amsterdam, 1991.

\bibitem{Banerjee92}
U.~Banerjee.
\newblock A note on the effect of numerical quadrature in finite element
  eigenvalue approximation.
\newblock {\em Numer. Math.}, 61:145--152, 1992.

\bibitem{BarSchwZol11}
A.~Barth, Ch. Schwab, and N.~Zollinger.
\newblock {Multilevel Monte Carlo finite element method for elliptic PDEs with
  stochastic coefficients}.
\newblock {\em Numer. Math.}, 119:123--161, 2011.

\bibitem{BierChern16}
C.~Bierig and A.~Chernov.
\newblock {Estimation of arbitrary order central statistical moments by the
  multilevel Monte Carlo method}.
\newblock {\em Stoch. Partial Differ.}, 4:3--40, 2016.

\bibitem{ClifGilSchTeck11}
K.~A. Cliffe, M.~B. Giles, R.Scheichl, and A.~L. Teckentrup.
\newblock {Multilevel Monte Carlo methods and applications to PDEs with random
  coefficients}.
\newblock {\em Comput. Visual. Sci.}, 14:3--15, 2011.

\bibitem{CKN06}
R.~Cools, F.Y. Kuo, and D.~Nuyens.
\newblock Constructing embedded lattices rules for multivariate integration.
\newblock {\em SIAM J. Sci. Comp.}, 28:2162--2188, 2006.

\bibitem{DKLeGNS14}
J.~Dick, F.~Y. Kuo, Q.~T. {Le~Gia}, D.~Nuyens, and Ch. Schwab.
\newblock {Higher-order QMC Petrov--Galerkin discretization for affine
  parametric operator equations with random field inputs}.
\newblock {\em SIAM J. Numer. Anal.}, 52:2676--2702, 2014.

\bibitem{DKS13}
J.~Dick, F.~Y. Kuo, and I.~H. Sloan.
\newblock {High-dimensional integration: The quasi-Monte Carlo way}.
\newblock {\em Acta Numer.}, 22:133--288, 2013.

\bibitem{DP10}
J.~Dick and F.~Pillichshammer.
\newblock {\em {Digital Nets and Sequences: Discrepancy Theory and Quasi-Monte
  Carlo Integration}}.
\newblock Cambridge University Press, New York, NY, 2010.

\bibitem{D99}
D.~C. Dobson.
\newblock {An efficient method for band structure calculations in 2D photonic
  crystals}.
\newblock {\em J. Comput. Phys.}, 149:363--376, 1999.

\bibitem{DH76}
J.~J. Duderstadt and L.~J. Hamilton.
\newblock {\em {Nuclear Reactor Analysis}}.
\newblock John Wiley \& Sons, New York, NY, 1976.

\bibitem{ElmSu19}
H.~C. Elman and T.~Su.
\newblock {Low-rank solution methods for stochastic eigenvalue problems}.
\newblock {\em SIAM J. Sci. Comp.}, 41:{A2657--A2680}, 2019.

\bibitem{GhanGhosh07}
R.~Ghanem and D.~Ghosh.
\newblock Efficient characterization of the random eigenvalue problem in a
  polynomial chaos decomposition.
\newblock {\em Int. J. Numer. Meth. Engng}, 72:486--504, 2007.

\bibitem{GGKSS19}
A.~D. Gilbert, I.~G. Graham, F.~Y. Kuo, R.~Scheichl, and I.~H. Sloan.
\newblock {Analysis of quasi-Monte Carlo methods for elliptic eigenvalue
  problems with stochastic coefficients}.
\newblock {\em Numer. Math.}, 142:863--915, 2019.

\bibitem{GGSS20}
A.~D. Gilbert, I.~G. Graham, R.~Scheichl, and I.~H. Sloan.
\newblock Bounding the spectral gap for an elliptic eigenvalue problem with
  uniformly bounded stochastic coefficients.
\newblock In {D. Wood et al.}, editor, {\em {2018 MATRIX Annals}}, pages
  29--43. Springer, Cham, 2020.

\bibitem{GS20a}
A.~D. Gilbert and R.~Scheichl.
\newblock {Multilevel quasi-Monte Carlo methods for random elliptic eigenvalue
  problems I: Regularity and analysis}.
\newblock {\em {Preprint, arXiv:2010.01044}}, 2022.

\bibitem{Giles08a}
M.~B. Giles.
\newblock {Multilevel Monte Carlo path simulation}.
\newblock {\em Oper. Res.}, 56:607--617, 2008.

\bibitem{GilesWater09}
M.~B. Giles and B.~Waterhouse.
\newblock {Multilevel quasi-Monte Carlo path simulation}.
\newblock In {\em {Advanced Financial Modelling, Radon Series on Computational
  and Applied Mathematics}}, pages 165--181. De Gruyter, New York, 2009.

\bibitem{HakKaarLaak15}
H.~Hakula, V.~Kaarnioja, and M.~Laaksonen.
\newblock Approximate methods for stochastic eigenvalue problems.
\newblock {\em Appl. Math. Comput.}, 267:664--681, 2015.

\bibitem{Hein01}
S.~Heinrich.
\newblock {Multilevel Monte Carlo methods}.
\newblock In {\em Multigrid Methods, Vol. 2179 of Lecture Notes in Computer
  Science}, pages 58--67. Springer, Berlin, 2001.

\bibitem{Hoer03}
L.~H\"ormander.
\newblock {\em {The Analysis of Linear Partial Differential Operators I}}.
\newblock Springer, Berlin, 2003.

\bibitem{HuCheng11}
X.~Hu and X.~Cheng.
\newblock {Acceleration of a two-grid method for eigenvalue problems}.
\newblock {\em Math. Comp.}, 80:1287--1301, 2011.

\bibitem{HuCheng15_corr}
X.~Hu and X.~Cheng.
\newblock {Corrigendum to: ``Acceleration of a two-grid method for eigenvalue
  problems''}.
\newblock {\em Math. Comp.}, 84:2701--2704, 2015.

\bibitem{JC13}
E.~Jamelota and P.~{Ciarlet Jr}.
\newblock {Fast non-overlapping Schwarz domain decomposition methods for
  solving the neutron diffusion equation}.
\newblock {\em J. Comput. Phys.}, 241:445–--463, 2013.

\bibitem{Joe06}
S.~Joe.
\newblock Construction of good rank-1 lattice rules based on the weighted star
  discrepancy.
\newblock In {H. Niederreiter \& D. Talay}, editor, {\em {Monte Carlo and
  Quasi-Monte Carlo Methods 2004}}, pages 181--196. Springer, Berlin, 2006.

\bibitem{K01}
P.~Kuchment.
\newblock {The mathematics of photonic crystals}.
\newblock {\em {SIAM, Frontiers of Applied Mathematics}}, 22:207--272, 2001.

\bibitem{KuoLattice}
F.~Y. Kuo.
\newblock \url{https://web.maths.unsw.edu.au/~fkuo/lattice/index.html}.
\newblock {\em Accessed August 24, 2020}, 2007.

\bibitem{KSSSU17}
F.~Y. Kuo, R.~Scheichl, Ch. Schwab, I.~H. Sloan, and E.~Ullmann.
\newblock {Multilevel quasi-Monte Carlo methods for lognormal diffusion
  problems}.
\newblock {\em {Math.~Comp.}}, 86:2827--2860, 2017.

\bibitem{KSS15}
F.~Y. Kuo, Ch. Schwab, and I.~H.Sloan.
\newblock {Multi-level quasi-Monte Carlo finite element methods for a class of
  elliptic PDEs with random coefficients}.
\newblock {\em {Found.~Comput.~Math}}, 15:411--449, 2015.

\bibitem{KSS12}
F.~Y. Kuo, Ch. Schwab, and I.~H. Sloan.
\newblock {Quasi-Monte Carlo finite element methods for a class of elliptic
  partial differential equations with random coefficients}.
\newblock {\em {SIAM J.~Numer.~Anal.}}, 50:3351--3374, 2012.

\bibitem{NS12}
R.~Norton and R.~Scheichl.
\newblock Planewave expansion methods for photonic crystal fibres.
\newblock {\em {Appl.~Numer.~Math.}}, 63:88--104, 2012.

\bibitem{NC06}
D.~Nuyens and R.~Cools.
\newblock {Fast algorithms for component-by-component construction of rank-1
  lattice rules in shift-invariant reproducing kernel Hilbert spaces}.
\newblock {\em Math.~Comp.}, 75:903--920, 2006.

\bibitem{NC06np}
D.~Nuyens and R.~Cools.
\newblock {Fast component-by-component construction of rank-1 lattice rules
  with a non-prime number of points}.
\newblock {\em J.~Complexity}, 22:4--28, 2006.

\bibitem{Parlett80}
B.~N. Parlett.
\newblock {\em {The Symmetric Eigenvalue Problem}}.
\newblock Prentice--Hall, Englewood Cliffs, NJ, 1980.

\bibitem{QiuLyu20}
Z.~Qui and Z.~Lyu.
\newblock Vertex combination approach for uncertainty propagation analysis in
  spacecraft structural system with complex eigenvalue.
\newblock {\em Acta Astronaut.}, 171:106--117, 2020.

\bibitem{RobNuyVan19}
P.~Robbe, D.~Nuyens, and S.~Vandewalle.
\newblock {Recycling samples in he multigrid multilevel (quasi)-Monte Carlo
  method}.
\newblock {\em SIAM J.~Sci.~Comp.}, 41:S37--S60, 2019.

\bibitem{Saad11}
Y.~Saad.
\newblock {\em {Numerical Methods for Large Eigenvalue Problems}}.
\newblock SIAM, Philadelphia, PA, 2011.

\bibitem{ShinAst72}
M.~Shinozuka and C.~J. Astill.
\newblock Random eigenvalue problems in structural analysis.
\newblock {\em AIAA Journal}, 10:456--462, 1972.

\bibitem{SW98}
I.~H. Sloan and H.~Wo\'zniakowski.
\newblock When are quasi-monte carlo algorithms efficient for high dimensional
  integrals?
\newblock {\em J. Complexity}, 14:1--33, 1998.

\bibitem{Thom81}
W.~T. Thomson.
\newblock {\em {The Theory of Vibration with Applications}}.
\newblock Prentice--Hall, Englewood Cliffs, NJ, 1981.

\bibitem{W10}
M.~M.~R. Williams.
\newblock A method for solving stochastic eigenvalue problems.
\newblock {\em Appl.~Math.~Comput.}, 215:4729--–4744, 2010.

\bibitem{Xie14}
H.~Xie.
\newblock {A multigrid method for eigenvalue problem}.
\newblock {\em J.~Comput.~Phys.}, 274:550--561, 2014.

\bibitem{XuZhou99}
J.~Xu and A.~Zhou.
\newblock {A two-grid disretization scheme for eigenvalue problems}.
\newblock {\em Math.~Comp.}, 70:17--25, 1999.

\bibitem{YangBi11}
Y.~Yang and H.~Bi.
\newblock {Two-grid finite element discretization schemes based on
  shifted-inverse power method for elliptic eigenvalue problems}.
\newblock {\em SIAM J.~Numer.~Anal.}, 49:1602--1624, 2011.

\end{thebibliography}

\begin{appendix}
\section{Proof of Theorem~\ref{thm:2grid}}
\begin{proof}
First of all, we can use the triangle inequality to split the eigenfunction error into
\begin{align}
\nonumber
\|u(\bsy) - u^{h, s}(\bsy)\|_V \,&\leq\, 
\|u(\bsy) - u_s(\bsy)\|_V + \|u_s(\bsy) - u_{h, s}(\bsy)\|_V
+ \|u_{h, s}(\bsy) - u^{h, s}(\bsy)\|_V\\
&\lesssim\, 
 h +  s^{-(1/p + 1)}
+ \|u_{h, s}(\bsy) - u^{h, s}(\bsy)\|_V,
\label{eq:tg_u_tri}
\end{align}
where we have used \cite[Theorem~4.1]{GGKSS19} and \eqref{eq:fe_lam_u}, and
the constant is independent of $h$, $s$ and $\bsy$.  
All that remains for the eigenfunction result is to bound the third term above.

To this end, we can rewrite Step 2 of Algorithm~\ref{alg:2grid} using \eqref{eq:T_h,s} as
\[
\calA_s(\bsy; u^{h, s}(\bsy) - \lambda_{H, S}(\bsy) T_{h, s} u^{h, s}(\bsy), v_h)
\,=\, \calA_s(\bsy; T_{h, s} u_{H, S}(\bsy), v_h)
\quad \text{for all } v_h \in V\,,
\]
which is equivalent to the operator equation: Find $u^{h, s}(\bsy) \in V_h$
such that
\[
\bigg(\frac{1}{\lambda_{H, S}(\bsy)}  - T_{h, s}\bigg) u^{h, s}(\bsy) \,=\, 
\frac{1}{\lambda_{H, S}(\bsy)} T_{h, s} u_{H, S}(\bsy).
\]
This is in turn equivalent (up to a constant scaling factor) to the problem:
find $\widetilde{u} \in V_h$ such that
\begin{equation}
\label{eq:two_grid_op_eq}
\bigg(\frac{1}{\lambda_{H,S}(\bsy)} - T_{h, s}\bigg)\widetilde{u} \,=\, 
\frac{\lambda_{H, S}(\bsy) T_{h, s}u_{H, S}(\bsy)}
{\nrm{\lambda_{H, S}(\bsy) T_{h, s}u_{H, S}(\bsy)}{V}}
\,\eqqcolon\, u_0\,.
\end{equation}
Explicitly,
\[
\widetilde{u} \,=\, \frac{\lambda_{H, S}(\bsy) u^{h, s}(\bsy)}{\|T_{h, s} u_{H, S}(\bsy)\|_V},
\]
but after normalisation (Step 3) $u^{h, s}(\bsy) = \widetilde{u}/ \nrm{\widetilde{u}}{\calM}$.

We now apply Theorem~3.2 from \cite{YangBi11} to \eqref{eq:two_grid_op_eq},
using the space $X = V$ and with
\[
\mu_0 = \frac{1}{\lambda_{H, S}(\bsy)}
\quad \text{and} \quad 
u_0 \,=\, \frac{\lambda_{H, S}(\bsy) T_{h, s} u_{H, S}(\bsy)}{\|\lambda_{H, S}(\bsy) T_{h, s} u_{H, S}(\bsy)\|_V}.
\]
To do so, we must first verify that $\mu_0$ and $u_0$ satisfy the
required assumptions of \cite[Theorem~3.2]{YangBi11}, 
namely, $\|u_0\|_V = 1$, $\mu_0$ is not an eigenvalue of $T_{h, s}$,
and for all $\bsy \in \Omega$
\begin{align}
\label{eq:YangBi1}
\dist(u_0, E_h(\lambda_s(\bsy))  \,&\leq\, \frac{1}{2} \quad \text{and}\\
\label{eq:YangBi2}
|\mu_0 - \mu_{2, h, s}(\bsy)| \,&\geq\, \frac{\mu_{1, s}(\bsy) - \mu_{2, s}(\bsy)}{2}
\,\eqqcolon\,\frac{\widetilde{\rho}_s(\bsy)}{2}.
\end{align}
Recall that $\mu_k(\bsy) = 1/\lambda_k(\bsy)$ is an
eigenvalue of $T$, and similarly, subscripts $h$ and $s$ denote their FE and 
dimension-truncated counterparts, respectively. 
Clearly, the first two assumptions hold, 
and so it remains to verify \eqref{eq:YangBi1} and \eqref{eq:YangBi2}.

To show \eqref{eq:YangBi1}, since $\lambda_{s}(\bsy)$ is simple we have
\begin{align}
\label{eq:u0dist}
\dist (u_0, &E_h(\lambda_s(\bsy)) 
\nonumber\\&
=\, \inf_{\alpha \in \R} \|u_0 - \alpha u_{h, s}(\bsy)\|_V
\nonumber\\
&=\, \frac{1}{\lambda_{H, S}(\bsy) \|T_{h, s} u_{H, S}(\bsy)\|_V}
\inf_{\alpha \in \R} \|\lambda_{H, S}(\bsy) T_{h, s} u_{H, S}(\bsy) - \alpha u_{h, s}(\bsy)\|_V.
\end{align}

To show that the first factor can be bounded by a constant, we use 
the reverse triangle inequality along with the lower bound
\eqref{eq:lam_bnd}, which since $H>0$ was assumed to be sufficiently small gives
\begin{equation}
\label{eq:lamTu_tri}
\lambda_{H, S}(\bsy) \|T_{h, s} u_{H, S}(\bsy)\|_V \,\geq\, \lambdaunder
\big| \|Tu_{H, S}(\bsy)\|_V - \|(T - T_{h, s})u_{H, S}(\bsy)\|_V\big|.
\end{equation}
Now, by the equivalence of norms \eqref{eq:A_equiv} we have
\begin{align*}
\|T u_{H, S}(\bsy)\|_V \,\geq\, \frac{1}{C_\calA}
\sqrt{\calA(\bsy; T u_{H, S}(\bsy), T u_{H, S}(\bsy))}.
\end{align*}
Then using the definition of $T$, along with the facts
that $u_{H, S}(\bsy)$ is an eigenfunction and $\calA(\bsy)$ is symmetric,
we can simplify this as
\begin{align*}
\calA(\bsy; T u_{H, S}(\bsy), T u_{H, S}(\bsy)) 
\,&=\, \calM(u_{H, S}(\bsy), T u_{H, S}(\bsy) )
\\
&=\, \frac{1}{\lambda_{H, S}(\bsy)} \calA(\bsy; u_{H, S}(\bsy), T u_{H, S}(\bsy))
\\
&=\, \frac{1}{\lambda_{H, S}(\bsy)} \calM(u_{H, S}(\bsy), u_{H, S}(\bsy))
\,\geq\, \frac{1}{\lambdaover},
\end{align*}
where for the last inequality we have used \eqref{eq:lam_bnd} and $\|u_{H, S}(\bsy)\|_\calM = 1$. 
Hence, we have the constant lower bound
\begin{equation}
\label{eq:Tu_lower}
\|T u_{H, S}(\bsy)\|_V \,\geq\, C_\calA^{-1} \lambdaover^{-1/2},
\end{equation}
which is independent of $\bsy$, $S$ and $H$.

For the second term in \eqref{eq:lamTu_tri}, by \eqref{eq:T-T_hs}
we have the upper bound
\begin{align}
\label{eq:Tu-T_hs_u}
\|(T-T_{h, s}) u_{H, S}(\bsy)\| \,&=\, \|T - T_{h, s}\| \|u_{H, S}(\bsy)\|_V
\nonumber\\
&\leq\, C_T(s^{-1/p + 1} + h) \sqrt{\lambda_{H, S}(\bsy)} \|u_{H, S}(\bsy)\|_{\calM}
\nonumber\\
&\leq\, \lambdaover^{1/2} C_T (s^{-1/p + 1} + h),
\end{align}
where we have used that $u_{H, S}(\bsy)$ is an eigenfunction, normalised in $\calM$,
and also \eqref{eq:lam_bnd}.

Returning to \eqref{eq:lamTu_tri}, since $\|T u_{H, S}(\bsy)\|_V$
is bounded from below by a constant, by \eqref{eq:Tu-T_hs_u}
there exists $S_0 \in \N$ sufficiently large and $H_0 > 0$ sufficiently small
such that for all $s \geq S_0$ and $h \leq H_0$
we have $\|T u_{H, S}(\bsy)\|_V > \|(T - T_{h, s})u_{H, S}(\bsy)\|_V$.
Thus, substituting \eqref{eq:Tu_lower} and \eqref{eq:Tu-T_hs_u} into \eqref{eq:lamTu_tri}, 
we have the lower bound
\begin{align*}
\lambda_{H, S}(\bsy)\|T_{h, s}u_{H, S}(\bsy)\|_V \,&\geq\,
C_\calA^{-1} \lambdaunder\lambdaover^{-1/2}  
-  \lambdaunder \lambdaover^{1/2} C_T (s^{-1/p + 1} + h)
\\
&\geq\, 
C_\calA^{-1}\lambdaunder\lambdaover^{-1/2} 
- \lambdaunder\lambdaover^{1/2} C_T (S_0^{-1/p + 1} + H_0)
 \,\eqqcolon\, \frac{1}{C_{u_0}} \,>\, 0,
\end{align*}
where $0 < C_{u_0} < \infty $ is independent of $s, S, h, H$ and $\bsy$.
It follows that
\[
\dist\big(u_0, E_h(\lambda_s(\bsy))\big)
\,\leq\, C_{u_0} \|u_{h, s}(\bsy) - \lambda_{H, S}(\bsy) T_{h, s} u_{H, S}(\bsy)\|_V.
\]
For the second factor in \eqref{eq:u0dist}, using \eqref{eq:T_h,s},
for all $v_h \in V_h$ we have the identity
\begin{align*}
\calA_s(\bsy; u_{h, s}(\bsy) - \lambda_{H, S}T_{h, s} u_{H, S}(\bsy), v_h)
\,=\,
&\big[\lambda_{h, s}(\bsy) - \lambda_{H, S}(\bsy)\big] \calM(u_{H, S}(\bsy), v_h)
\\
&+ \lambda_{h, s}(\bsy) \calM(u_{h, s}(\bsy) - u_{H, S}(\bsy), v_h)\,.
\end{align*}
Letting $v_h = u_{h, s}(\bsy) - \lambda_{H, S}(\bsy) T_{h, s} u_{H, S}(\bsy)$, then using that
$\calA_s$ is coercive, as well as applying the triangle and Cauchy--Schwarz inequalities, we have
\begin{align*}
\|u_{h, s}(\bsy) -  &\lambda_{H, S}T_{h, s} u_{H, S}(\bsy)\|_V^2\\
&\lesssim\, 
\big|\lambda_{h, s}(\bsy) - \lambda_{H, S}(\bsy)\big| \|u_{H, S}\|_\calM
\|u_{h, s}(\bsy) - \lambda_{H, S}(\bsy) T_{h, s} u_{H, S}(\bsy)\|_\calM\\
&\quad+ \lambda_{h, s}(\bsy) \nrm{u_{h, s}(\bsy) - u_{H, S}(\bsy)}{\calM}
\|u_{h, s}(\bsy) - \lambda_{H, S}(\bsy) T_{h, s} u_{H, S}(\bsy)\|_\calM.
\end{align*}
Dividing through by $\|u_{h, s}(\bsy) - \lambda_{H, S}(\bsy) T_{h, s}
u_{H, S}(\bsy)\|_V$ and applying the Poincar\'e inequality \eqref{eq:poin} gives
\[
\|u_{h, s}(\bsy) -  \lambda_{H, S}T_{h, s} u_{H, S}(\bsy)\|_V\\
\,\lesssim\, \big|\lambda_{h, s}(\bsy) - \lambda_{H, S}(\bsy)\big| 
+ \lambdaover \nrm{u_{h, s}(\bsy) - u_{H, S}(\bsy)}{\calM},
\]
where we have also used that $\|u_{H, S}(\bsy)\|_\calM = 1$ and \eqref{eq:lam_bnd}. 
We can incorporate $\lambdaover$ into the constant, and then split the
right hand side again using the triangle inequality to give
\begin{align*}
\|u_{h, s}(\bsy) -  &\lambda_{H, S}T_{h, s} u_{H, S}(\bsy)\|_V
\,\lesssim\, \big|\lambda(\bsy) - \lambda_s(\bsy)| 
+ |\lambda_s(\bsy) - \lambda_{h, s}(\bsy)|
\\
&+  |\lambda(\bsy) - \lambda_S(\bsy)| 
+ |\lambda_S(\bsy) - \lambda_{H, S}(\bsy)\big| 
+ \|u(\bsy) - u_s(\bsy)\|_V 
\\
&+ \|u_s(\bsy) - u_{h, s}(\bsy)\|_\calM 
+ \|u(\bsy) - u_S(\bsy)\|_V 
+ \|u_S(\bsy) - u_{H, S}(\bsy)\|_\calM,
\end{align*}
where we have also applied the Poincar\'e inequality \eqref{eq:poin}
again to switch to the $V$-norms
for the eigenfunction truncation errors.

Now, each of the terms in \eqref{eq:u0dist} can be bounded by \cite[Theorems 2.6 \& 4.1]{GGKSS19}
to give
\begin{align}
\label{eq:tg_dist_upper}
\dist(u_0, E_h(\lambda_s(\bsy)) \,&\lesssim\,
\|u_{h, s}(\bsy) -  \lambda_{H, S}T_{h, s} u_{H, S}(\bsy)\|_V 
\nonumber\\
&\lesssim\,
s^{-1/p + 1} + S^{-1/p + 1} + h^2 + H^2,
\end{align}
where to bound the FE error in the $\calM$-norm we have used \cite[eqn.~(2.35)]{GGKSS19} 
with the functional 
$\calG = \calM(\cdot, u_s(\bsy) - u_{h, s}(\bsy))/\|u_s(\bsy) - u_{h, s}(\bsy)\|_\calM \in L^2(D)$
(and similarly for $u_S(\bsy) - u_{H, S}(\bsy)$).
It follows from \eqref{eq:tg_dist_upper} 
that there exists $S$ sufficiently large and $H$
sufficiently small --- both independent of $\bsy$ --- such that
\eqref{eq:YangBi1} holds.

Next, to verify \eqref{eq:YangBi2}, since
$\mu_0 = 1/\lambda_{H, S}(\bsy) \eqqcolon \mu_{H, S}(\bsy)$ and since the FE eigenvalues converge
from above and thus $\mu_{2,h, s}(\bsy) \leq \mu_{2, s}(\bsy)$, 
\begin{align}
\label{eq:tg_mu0_a}
|\mu_0 - \mu_{2, h, s}(\bsy)| \,&=\, \mu_{H, S}(\bsy) - \mu_{2, h, s}(\bsy)
\,\geq\, \mu_{H, S}(\bsy) - \mu_{2, s}(\bsy)
\nonumber\\
&=\, \widetilde{\rho}_s(\bsy) - \big(\mu_s(\bsy) - \mu_{H, S}(\bsy)\big).
\end{align}

Now, suppose that $\big(\mu_s(\bsy) - \mu_{H, S}(\bsy)\big) \leq 0$, then
\eqref{eq:tg_mu0_a} simplifies to 
\[
|\mu_0 - \mu_{2, h, s}(\bsy)| \,\geq\,
\widetilde{\rho}_s(\bsy) \,\geq\, \frac{\widetilde{\rho}_s(\bsy)}{2},
\]
as required. Alternatively, if $\big(\mu_s(\bsy) - \mu_{H, S}(\bsy)\big) > 0$
then \eqref{eq:tg_mu0_a} becomes
\[
|\mu_0 - \mu_{2, h, s}(\bsy)| \,\geq\,
\widetilde{\rho}_s(\bsy) - \big|\mu_s(\bsy) - \mu_{H,S}(\bsy)\big|.
\]
By the triangle inequality we can bound the second term on the right, 
again using the bounds from \cite[Theorems~2.6 \& 4.1]{GGKSS19}, as
well as \eqref{eq:lam_bnd}, to give
\begin{align*}
\big|\mu_s(\bsy) - \mu_{H, S}(\bsy)\big| 
\,&\leq\, \frac{|\lambda(\bsy) - \lambda_s(\bsy)|}{\lambda(\bsy)\lambda_s(\bsy)} 
+ \frac{|\lambda(\bsy) - \lambda_S(\bsy)|}{\lambda(\bsy)\lambda_S(\bsy)}
+ \frac{|\lambda_S(\bsy) - \lambda_{H, S}(\bsy)|}{\lambda_{S}(\bsy) \lambda_{H, S}(\bsy)}
\\
&\leq\, \frac{C}{\lambdaunder^2}(s^{-1/p + 1} + S^{-1/p + 1} +H^2).
\end{align*}
The upper bound is independent of $\bsy$, thus we can take 
$S$ sufficiently large and $H$ sufficiently small,
such that, using the bound on the spectral gap 
in \eqref{eq:gap} together with \eqref{eq:lam_bnd},
\begin{equation}
\label{eq:tg_mu0_b}
\big|\mu_s(\bsy) - \mu_{H, S}(\bsy)\big| \,\leq\, 
\frac{1}{2}\frac{\rho}{\overline{\lambda_1}\overline{\lambda_2}}
\,\leq\, \frac{1}{2}\frac{\lambda_{2, s}(\bsy) - 
\lambda_{s}(\bsy)}{\lambda_s(\bsy)\lambda_{2, s}(\bsy)}
\,=\, \frac{\widetilde{\rho}_s(\bsy)}{2}.
\end{equation}
Then, to show \eqref{eq:YangBi2} we can substitute the bound above into
\eqref{eq:tg_mu0_a}.

Hence, we have verified the assumptions for \cite[Theorem~3.2]{YangBi11} for all $\bsy$.
Since $\lambda_s(\bsy)$, $\lambda_{h, s}(\bsy)$ are simple,
$\dist(u^{h, s}(\bsy), \Ehat_h(\lambda_s(\bsy)) = \|u_{h, s}(\bsy) - u^{h, s}(\bsy)\|_V$
and hence, it now follows from \cite[Theorem~3.2]{YangBi11} 
that
\begin{equation}
\label{eq:u_s_YangBi}
\|u_{h, s}(\bsy) - u^{h, s}(\bsy)\|_V 
\leq\, \frac{16}{\widetilde{\rho}_s(\bsy)} 
\frac{|\lambda_{h, s}(\bsy) - \lambda_{H, S}(\bsy)|}{\lambda_{h, s}(\bsy)\lambda_{H, S}(\bsy)}
\|u_{h, s}(\bsy) - \lambda_{H, S}(\bsy)T_{h, s} u_{H, S}(\bsy)\|_V.
\end{equation}

We handle the three factors in turn. For the first factor, by the argument used in \eqref{eq:tg_mu0_b}
we have $1/\widetilde{\rho}_s(\bsy) \leq \overline{\lambda_1} \overline{\lambda_2}/\rho$,
independently of $\bsy$.
For the second factor, we can use the 
uniform lower bound \eqref{eq:lam_bnd}, and then the triangle inequality to give the 
upper bound
\begin{align*}
\frac{|\lambda_{h, s}(\bsy) - \lambda_{H, S}(\bsy)|}{\lambda_{h, s}(\bsy)\lambda_{H, S}(\bsy)}
\leq\, \frac{1}{\underline{\lambda}^2} 
\Big(&|\lambda(\bsy) - \lambda_s(\bsy)| + |\lambda_s(\bsy) - \lambda_{h, s}(\bsy)| \\
&+ 
|\lambda(\bsy) - \lambda_S(\bsy)| + |\lambda_S(\bsy) - \lambda_{H, S}(\bsy)|\Big)\,.
\end{align*}
Each term above can be bounded by using one of Theorems 2.6 or 4.1 from \cite{GGKSS19}
to give
\begin{equation}
\label{eq:tg_u_s1}
\frac{|\lambda_{h, s}(\bsy) - \lambda_{H, S}(\bsy)|}{\lambda_{h, s}(\bsy)\lambda_{H, S}(\bsy)}
 \,\lesssim\, 
s^{-(1/p - 1)} + h^2 + S^{-(1/p - 1)} + H^2,
\end{equation}
where the constant is again independent of $s, S, h, H$ and $\bsy$.

Finally, the third factor in \eqref{eq:u_s_YangBi} can be bounded using \eqref{eq:tg_dist_upper}.
Hence, substituting \eqref{eq:tg_u_s1} and \eqref{eq:tg_dist_upper} into \eqref{eq:u_s_YangBi}
we obtain the upper bound
\begin{align}
\label{eq:tg_u_hs_err}
\|u_{h, s}(\bsy) - u^{h, s}(\bsy)\|_V 
\,&\lesssim\, 
s^{-2(1/p - 1)} + h^4 + S^{-2(1/p - 1)} + H^4
\nonumber\\
+ 2 \big(s^{-(1/p - 1)}h^2 + 
& s^{-(1/p - 1)} S^{-(1/p - 1)} + s^{-(1/p - 1)}H^2
+ h^2 S^{-(1/p - 1)} + h^2 H^2 + S^{-(1/p - 1)} H^2 \big)
\nonumber\\
&\lesssim\,
H^4 + S^{-2(1/p - 1)} + H^2 S^{-(1/p - 1)}\,,
\end{align}
where we have used the fact that $s \geq S$ and $h \leq H$ to obtain the last inequality.
Then to give the error bound \eqref{eq:tg_u_err}, 
we simply substitute \eqref{eq:tg_u_hs_err} into \eqref{eq:tg_u_tri}.

The second result \eqref{eq:tg_lam_err} follows from
Lemma~\ref{lem:RQ-diff}, by choosing 
$\calB = \calA(\bsy; \cdot, \cdot)$, 
$\widetilde{\calB} = \calA_s(\bsy; \cdot, \cdot) = \calA(\bsy_s; \cdot, \cdot)$, 
$u = u(\bsy)$ and $w = u^{h, s}(\bsy)$. Noting that $\|u^{h,
  s}\|_\calM = 1$ and 
using the definition of $\lambda^{h, s}(\bsy)$ in
\eqref{eq:RQ_update},  this gives
\begin{align}
\lambda^{h, s}(\bsy)  - \lambda(\bsy)
\,=\, &\|u(\bsy) - u^{h, s}(\bsy)\|_{\calA(\bsy_s)} ^2
- \lambda(\bsy) \|u(\bsy) - u^{h, s}(\bsy)\|_{\calA(\bsy)}^2
\nonumber\\
&+ \calA\big(\bsy - \bsy_s; u(\bsy), u(\bsy) - 2u^{h, s}(\bsy)\big)
\nonumber\\
\lesssim\, & \|u(\bsy) - u^{h, s}(\bsy)\|_V^2 
+ \calA\big(\bsy - \bsy_s; u(\bsy), u(\bsy) - 2u^{h, s}(\bsy)\big),
\label{eq:lam-lam^hs=}
\end{align}
where we simplified using the linearity of $\calA(\bsy)$ in $\bsy$ and
used the equivalence of norms in \eqref{eq:A_equiv} and \eqref{eq:lam_bnd}, 
which both hold for all $\bsy$.

The last term from \eqref{eq:lam-lam^hs=} is bounded as follows
\begin{align}
\calA(\bsy - \bsy_s; &u(\bsy),  u(\bsy) - 2u^{h, s}(\bsy))
\nonumber\\
&=\, \int_D \sum_{j > s} \big(y_j a_j(\bsx) \nabla u(\bsy) \cdot \nabla [u(\bsy) - 2u^{h, s}(\bsy)]
\nonumber\\
&\qquad\qquad+ y_j b_j(\bsx) u(\bsy)[u(\bsy) - 2u^{h, s}(\bsy)]
\big) \rd \bsx
\nonumber\\
&\leq\, \frac{1}{2} \sum_{j > s} \big[
\|a_j\|_{L^\infty} \| u(\bsy) \|_V (\| u(\bsy) \|_V + 2 \|u^{h, s}(\bsy)\|_V)
\nonumber\\
&\qquad\qquad+ \| b_j \|_{L^\infty} \| u(\bsy) \|_{L^2} (\| u(\bsy) \|_{L^2} + 2 \|u^{h, s}(\bsy)\|_{L^2})\big]
\nonumber\\
&\lesssim\, \sum_{j > s} \max \big( \| a_ j\|_{L^\infty}, \| b_j \|_{L^\infty}\big) 
\,\lesssim\, s^{-(1/p - 1)},
\label{eq:A-A_s}
\end{align}
where in the second last inequality we have bounded the $V$-norms using 
\eqref{eq:u_bnd} and the $L^2$-norms using 
the equivalence to the $\calM$-norm \eqref{eq:M_equiv}, and then used
that $u(\bsy)$ and $u^{h, s}(\bsy)$ are normalised. The tail sum in the last inequality is 
bounded using \cite[Theorem 5.1]{KSS12}.

Finally, the result \eqref{eq:tg_lam_err} is obtained by substituting 
\eqref{eq:tg_u_err} and \eqref{eq:A-A_s} into \eqref{eq:lam-lam^hs=}.
\end{proof}
\end{appendix}

\end{document}